\newtheorem{neu}{}[section]
\newtheorem*{Cor*}{Corollary}
\newtheorem{Thm}[neu]{Theorem}
\newtheorem*{Thm*}{Theorem}
\newtheorem*{Prop*}{Proposition}
\theoremstyle{definition}
\newtheorem{Lemma}[neu]{Lemma}
\newtheorem*{Rmk*}{Remark}
\newtheorem{Rmk}[neu]{Remark}
\newtheorem*{Ex*}{Example}
\newtheorem*{Qu*}{Question}
\newtheorem{Conv}[neu]{Convention}
\newtheorem{Assump}{Assumption}
\newcommand{\N}{\mathbb{N}}
\newcommand{\Z}{\mathbb{Z}}
\newcommand{\R}{\mathbb{R}}
\newcommand{\C}{\mathbb{C}}
\newcommand{\om}{\omega}
\newcommand{\ev}{\mathrm{ev}}
\newcommand{\A}{\mathcal{A}}
\renewcommand{\S}{\mathfrak{S}}
\newcommand{\F}{\mathcal{F}}
\newcommand{\M}{\mathcal{M}}
\renewcommand{\H}{\mathrm{H}}
\newcommand{\CM}{\mathrm{CM}}
\newcommand{\HM}{\mathrm{HM}}
\newcommand{\Crit}{\mathrm{Crit}}
\newcommand{\beq}{\begin{equation}}
\newcommand{\beqn}{\begin{equation}\nonumber}
\newcommand{\eeq}{\end{equation}}
\newcommand{\bea}{\begin{equation}\begin{aligned}}
\newcommand{\bean}{\begin{equation}\begin{aligned}\nonumber}
\newcommand{\eea}{\end{aligned}\end{equation}}
\numberwithin{equation}{section}
\definecolor{Urs}{rgb}{0,.7,0}
\definecolor{Peter}{rgb}{0,0,1}
\definecolor{red}{rgb}{1,0,0}
\newcommand{\p}{\partial}
\newcommand{\Hess}{\mathrm{Hess}}
\begin{document}
\title{Cuplength estimates in Morse cohomology}
\author{Peter Albers}
\author{Doris Hein}
\address{
    Peter Albers\\
   Mathematisches Institut\\
    Westf\"alische Wilhelms-Universit\"at M\"unster}
\email{peter.albers@wwu.de}
\address{Doris Hein\\
Mathematisches Institut\\
Albert-Ludwigs-Universit\"at Freiburg}
\email{doris.hein@math.uni-freiburg.de}
\keywords{}
\subjclass[2000]{}
\begin{abstract}
The main goal of this paper is to give a unified treatment to many known cuplength estimates with a view towards Floer theory.
As the base case, we prove that for $C^0$-perturbations of a function which is Morse-Bott along a closed submanifold, the number of critical points is bounded below in terms of the cuplength of that critical submanifold. As we work with rather general assumptions the proof also applies in a variety of Floer settings. For example, this proves lower bounds (some of which were known) for the number of fixed points of Hamiltonian diffeomorphisms, Hamiltonian chords for Lagrangian submanifolds, translated points of contactomorphisms, and solutions to a Dirac-type equation.
\end{abstract}
\maketitle

\section{Introduction}

In this paper, we prove a rather general cuplength estimate in Morse homology and explain how this generalizes in the context of Floer theory. The main goal is to give a unified treatment of many known cuplength-type results in Floer theory including one improvement and a new result.

Let $M$ be a manifold (not necessarily compact) and $F\colon M\to\R$ be a smooth function. We assume that $Z\subset M$ is a closed (i.e.~compact and without boundary), connected submanifold satisfying $Z\subset\Crit F$ and $F$ is Morse-Bott along $Z$, that is, for all $z\in Z$ we have
\beq
\ker \Hess_z F=T_zZ\;.
\eeq
For convenience of notation, we also assume that $F|_Z=0$. Furthermore, we assume that the spectral gap $\S$ of $F$ with respect to $Z$ is positive.
Here, the spectral gap is defined by
\beq
\S\equiv\S(F,Z):=\inf\{|F(x)|\colon x\in\Crit F\setminus Z\}>0\;.
\eeq
If $F$ does not have critical points outside of $Z$, we set $\S:=\infty$. Since $M$ is not assumed to be compact, we have to make an assumption which guarantees that solution spaces to (generalized) gradient flow equations are well-behaved. This is Assumption \ref{Ass} on $C^\infty_{loc}$-compactness below. 

\begin{Thm}
\label{thm:main_intro}
Let $h:M\to\R$ be a smooth function such that
\beq
\|h-F\|:=\sup(h-F)-\inf(h-F)<\S
\eeq
and Assumption \ref{Ass} is satisfied, then $h$ has at least $\mathrm{cuplength}(Z)+1$ critical points with critical values in the interval $[-\|h-F\|,\|h-F\|]$.
\end{Thm}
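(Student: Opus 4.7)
\emph{Plan.} My strategy is to set up a filtered essential Morse cohomology for $h$, identify it ring-theoretically with $H^*(Z)$ via continuation from $F$, and then invoke the classical Lusternik--Schnirelmann--Morse cuplength argument. Write $a:=\|h-F\|<\S$. First I would $C^\infty$-approximate $h$ by a Morse function $\tilde h$ with $\tilde a:=\|\tilde h-F\|$ still strictly less than $\S$ and such that $\pm\tilde a$ are regular values of $\tilde h$. Define the essential Morse cochain complex $\CFe^*(\tilde h)$ generated by $\Crit \tilde h\cap \tilde h^{-1}([-\tilde a,\tilde a])$, with the Morse differential and the Morse cup product (defined via $Y$-shaped gradient trees). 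The spectral gap condition $\tilde a<\S$, combined with the $C^\infty_{\mathrm{loc}}$-compactness of Assumption \ref{Ass}, ensures that the trajectories and $Y$-trees defining the differential and cup product remain inside the essential window: any such configuration escaping the window would have to traverse a critical level of an intermediate function at absolute value at least $\S$, contradicting the spectral gap. Hence $\HFe^*(\tilde h)$ is a well-defined unital graded-commutative ring.

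Along the linear homotopy $F_s:=F+s(\tilde h-F)$, the estimate $\|F_s-F\|\leq s\tilde a<\S$ persists throughout $s\in[0,1]$, so a standard continuation argument produces a ring isomorphism $\HFe^*(F)\xrightarrow{\sim}\HFe^*(\tilde h)$. A Morse--Bott calculation, equivalently realized by a small auxiliary Morse perturbation of $F$ concentrated in a neighborhood of $Z$ and kept inside the spectral gap, identifies $\HFe^*(F)\cong H^*(Z)$ as graded rings.

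The Lusternik--Schnirelmann--Morse argument now runs as follows. For each nonzero $\alpha\in\HFe^*(\tilde h)$ define the spectral number
\beq
c(\alpha):=\inf\bigl\{b\in\R\colon \alpha\in \im(\HFe^{*,\leq b}(\tilde h)\to\HFe^*(\tilde h))\bigr\},
\eeq
where $\HFe^{*,\leq b}$ is formed from critical points with value $\leq b$. Standard arguments show $c(\alpha)$ is a critical value of $\tilde h$ in $[-\tilde a,\tilde a]$, and the cuplength inequality $c(\alpha\cup\beta)\geq c(\alpha)$ (for $\beta\in\HFe^{>0}(\tilde h)$ with $\alpha\cup\beta\neq 0$) holds, with equality forcing an additional distinct critical point at the common level. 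Picking $\alpha_1,\dots,\alpha_\ell\in H^{>0}(Z)$ ($\ell=\mathrm{cuplength}(Z)$) with $\alpha_1\cup\cdots\cup\alpha_\ell\neq 0$, transporting them to $\HFe^*(\tilde h)$, and applying the inequality inductively to $\beta_0=1,\ \beta_i=\alpha_1\cup\cdots\cup\alpha_i$ yields $\ell+1$ distinct essential critical points of $\tilde h$. Letting $\tilde h\to h$ in $C^\infty$ with $\tilde a\searrow a$, and using $C^0$-continuity of the spectral numbers, transfers the bound to $h$ with critical values in $[-a,a]$.

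The main obstacle will be the equality case of the Lusternik--Schnirelmann cuplength inequality: showing that when two consecutive spectral numbers coincide the common critical level carries an additional essential critical point, distinct from those already accumulated. This is the technical heart of the Morse-theoretic cuplength argument and requires careful manipulation of cup-product cocycle representatives on the filtered complex. A secondary but real difficulty is the construction and continuation invariance of the essential complex and its ring structure on a possibly non-compact $M$; here Assumption \ref{Ass} supplies exactly the compactness of moduli spaces otherwise granted by closedness of $M$.
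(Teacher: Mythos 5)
Your route is genuinely different from the paper's: the paper never constructs a cohomology (ring) for $h$ at all. It uses only Morse theory on the closed manifold $Z$ together with the single parametrized moduli space $\M_{[0,R]}(Z)$ of flow lines of the interpolation $G_{r,s}$, defines a $k$-fold operation $\theta_R$ by cutting evaluation maps at times $R,2R,\dots,kR$ with stable manifolds of Morse functions $f_i$ on $Z$, shows $\theta_R$ is chain homotopic to the cup/cap operation $\theta_0$, and then extracts critical points of $h$ by stretching $R=n\to\infty$ and breaking the curves $\gamma_n(\cdot+nj)$. The first genuine gap in your proposal is that Assumption \ref{Ass} does not supply the compactness your construction needs: it is a statement only about the moduli space $\M$ of $G_{r,s}$-flow lines between $F$ and $h$, and says nothing about gradient trajectories of a Morse approximation $\tilde h$, about the $Y$-shaped trees defining your cup product on $\CFe^*(\tilde h)$, about continuation trajectories for the homotopy $F_s=F+s(\tilde h-F)$, or about the flow of an auxiliary Morse perturbation of $F$ near $Z$. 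On a noncompact $M$ none of these is automatically compact (the negative gradient flow of $\tilde h$ need not even be complete), so the essential complex, its ring structure, and the ring isomorphism $\HFe^*(\tilde h)\cong \H^*(Z)$ are not available from the stated hypotheses; and in the Floer settings the theorem is aimed at, a product on the Floer homology of $h$ may not exist at all, which is exactly why the paper is engineered to use only the cup product on $\H^*(Z)$.

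The second gap is the one you flag but do not close, and it is the crux rather than a refinement: $h$ is not assumed Morse, so after producing $\ell+1$ essential critical points for each Morse approximation $\tilde h_n$ (for Morse functions this already follows from $\dim \H^*(Z)$, no cup product needed), you must prevent these points from merging as $\tilde h_n\to h$. $C^0$-continuity of spectral numbers only yields limits $c_0\le\cdots\le c_\ell$ for $h$; when two coincide you need the Lusternik--Schnirelmann equality-case statement for $h$ itself, whose standard proofs use deformation lemmas along the flow of $h$, hence completeness of that flow and a Palais--Smale condition --- neither follows from Assumption \ref{Ass} or from $\|h-F\|<\S$. As written, your argument bounds the number of critical points of Morse perturbations of $h$, not of $h$. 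The paper avoids this issue entirely: nonvanishing of $\theta_{R=n}$ for every $n$ gives elements $\gamma_n$ of the cut-down moduli spaces, Assumption \ref{Ass} produces $C^\infty_{loc}$-limits $\gamma^{(j)}$ of the shifts $\gamma_n(\cdot+nj)$ which are $h$-gradient trajectories with ordered asymptotic values in $[-\|h-F\|,\|h-F\|]$ (Lemma \ref{bound F}), and the generic condition $W^s(x_i,\bar f_i)\cap\Crit(h)=\emptyset$ (available because one may assume $\Crit(h)$ finite) forces each intermediate $\gamma^{(j)}$ to be nonconstant, so the $k+1$ limiting critical points are pairwise distinct. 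If you want to salvage your approach, you would at minimum have to add hypotheses (e.g.\ properness of $h$ or Palais--Smale plus compactness for the auxiliary moduli spaces) that the theorem does not assume.
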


We recall that the cuplength of the critical submanifold $Z$ is defined as
\beq
\mathrm{cuplength}(Z):=\max\{k\in\N\mid \exists a_1,\ldots,a_k\in\H^{\geq1}(Z)\text{ such that } a_1\cup\ldots\cup a_k\neq0\}
\eeq
where $\H^{\geq1}(Z)$ denotes the cohomology in degree at least 1. For simplicity we use in this article cohomology with $\Z/2$-coefficients. We point out that in order to realize the cup-product on the cohomology of $Z$ in this paper we only use finite dimensional Morse cohomology of $Z$ and never Floer homology.

For a more detailed discussion of Assumption \ref{Ass}, more technical definitions are needed, see Section \ref{sec:Setting} for the precise statement and Section \ref{compactness} for how it implies compactness of all relevant moduli spaces.
Assumption \ref{Ass} holds if $M$ is compact or, more generally, if the functions involved are coercive. 

\begin{Rmk}
Theorem \ref{thm:main_intro} has probably proofs which are closer to classical Morse theory. The proof we present here is with an eye towards Floer-theoretic generalizations mentioned below. Indeed, the following stronger version of Theorem \ref{thm:main_intro} should be true. If the number of distinct critical values of $h$ is strictly smaller than $\mathrm{cuplength}(Z) +1$ then the cohomology of the critical set of $h$ is non-trivial. A Floer theoretic version of this has been proved in \cite{How12}.
\end{Rmk}

\subsection*{Acknowledgments} 
We are grateful to Urs Frauenfelder for discussions on an earlier version of this article. We thank the anonymous referee for his/her very thorough work. PA is supported by the SFB 878 - Groups, Geometry and Actions. Some of the work was carried out while DH was at the Institute for Advanced Study and supported by the NSF grant DMS-1128155.

\section{Floer theoretic applications}

We now illustrate Theorem \ref{thm:main_intro} on Floer theoretic examples. Even though Theorem \ref{thm:main_intro} is stated for finite-dimensional Morse theory we explain in detail in Section \ref{sec:Floer} how the proof of Theorem \ref{thm:main_intro} has to be adjusted in the context of Floer theory. Even though Theorem \ref{thm:main_intro} is of independent interest, our main focus is giving a unified explanation and proof of several cuplength estimates in Floer theory. Theorem \ref{thm:main_intro} is applied via the analogy of Floer homology as a half-infinite dimensional Morse theory on the loop space of a symplectic manifold. To ensure that Assumption \ref{Ass} holds in these cases, we now let $(W,\om)$ be a closed symplectic manifold.

\subsection{Symplectically aspherical manifolds}

First we assume that $(W,\om)$ is symplectically aspherical, that is $\om|_{\pi_2(M)}=0$. Then the symplectic area functional $\A:\Lambda W\to\R$ is defined on the space $\Lambda W:=C^\infty_{contr}(S^1,W)$ of contractible loops by
\beq
\A(x):=-\int_{D^2}u^*\om
\eeq
where the capping $u$ of $x$ is a smooth map $u:D^2\to W$ with $u|_{S^1}=x$. In this situation, we set $F:=\A$ and $M:=\Lambda W$ and observe that $\Crit F$ is the set constant loops and can be identified with $W$. We set
\beq
Z:=\Crit F=W\;.
\eeq
It follows from the non-degeneracy of the symplectic form that $F$ is indeed Morse-Bott along $Z$ and, since $F$ has no other critical points, we have $\S=\infty$. 

The function $h$ is then defined using Hamiltonian perturbations of $F$. 
We choose a function $H:S^1\times W\to\R$ and set
\beq
h(x):=\A_H(x)=F(x)+\int_0^1H(t,x(t))dt:\Lambda W\to\R\;.
\eeq
This is the usual action functional of classical mechanics. We have
\beq
\|h-F\| \leq \int_0^1\big[\max_W H(t,\cdot)-\min_W H(t,\cdot)\big]dt=:\|H\|_{\H}\;,
\eeq
i.e., in this case, $\|h-F\|$ is at most the Hofer norm $\|H\|_{\H}$ of the Hamiltonian $H$. Assumption \ref{Ass} is satisfied in this setting, see Section \ref{sec:Floer}, and thus Theorem \ref{thm:main_intro} is applicable.
The condition $\|h-F\|<\S=\infty  $ is in the current situation of course empty and we find that $\A_H=h$ has at least $\mathrm{cuplength}(W)+1$ critical points with critical values in the interval $[-\|H\|_{\H},\|H\|_{\H}]$, see Theorem \ref{thm:fixed points}. Of course, critical points of $\A_H$ are 1-periodic solution of the Hamiltonian equation
\beq
\dot x(t)=X_H(t,x(t)).
\eeq
This is a special case of a Theorem by Floer \cite{Flo89} and independently by Hofer \cite{Hof88}. In fact, they treated the following more general situation using Lagrangian Floer homology.

Let $L\subset W$ be a closed Lagrangian submanifold such that $\om|_{\pi_2(W,L)}=0$.  We consider the space $P_0(W,L):=\{x\in C^\infty([0,1],W)\mid x(0),\,x(1)\in L,\; [x]=0\in\pi_1(W,L)\}$ of paths in $W$ starting and ending on $L$ and which are contractible relative to $L$. Then the symplectic area functional $\A:P_0(W,L)\to\R$ is defined by
\beq
\A(x):=-\int_{D^+}u^*\om
\eeq
where the capping $u:D^+:=\{z\in\C\mid |z|\leq1,\;Im(z)\geq0\}\to W$ is a smooth map with $u|_{S^1\cap D^+}=x$ and $u|_{\R\cap D^+}\subset L$. Again we set $F:=\A$ and $M:=P_0(W,L)$ and observe that $\Crit F$ is the set of constant paths which is identified with $L$, i.e.,
\beq
Z:=\Crit F=L\;.
\eeq
It follows from the fact that $L$ is Lagrangian that $F$ is indeed Morse-Bott along $Z$ and since $F$ has no other critical points we have $\S=\infty$. As above, Hamiltonian perturbations of $F$ give rise to functions $h$ satisfying Assumption \ref{Ass}. That is, we choose a function $H:S^1\times W\to\R$ and set
\beq
h(x):=F(x)+\int_0^1H(t,x(t))dt:P_0(W,L)\to\R\;.
\eeq
Again the condition that $\|h-F\|<\S$ is empty and it follows from Theorem \ref{thm:main_intro} that $\A_H:=h$ has at least $\mathrm{cuplength}(L)+1$ critical points with critical values in the interval $[-\|H\|_{\H},\|H\|_{\H}]$, see Theorem \ref{thm:chords} for the exact statement. Of course, critical points of $\A_H$ are solutions of the Hamiltonian equation with Lagrangian boundary condition given by $L$, i.e.,
\beq
\left\{
\begin{aligned}
&\dot x(t)=X_H(t,x(t))\\
&x(0),\,x(1)\in L
\end{aligned}
\right.
\eeq
of $H$. This is the result previously proved independently by Floer and Hofer. Of course, the result for Lagrangian submanifolds contains the above as special case.

\subsection{Rational symplectic manifolds}

More generally, we may assume that $(M,\omega)$ is symplectically rational, i.e., $\omega|_{\pi_2(W)}=\lambda\Z$ for some $\lambda>0$. If the manifold is symplectically aspherical, we set $\lambda=\infty$. Now the symplectic area functional is defined on the cover $M:=\widetilde{\Lambda W}$ consisting of equivalence classes $\bar x:=[x,u]$ where $x\in\Lambda W$ and $u$ is a capping of $x$. Pairs $(x,u)$ and $(x,v)$ are declared equivalent if $u$ and $v$ have the same symplectic area. Then
\beq
F(\bar x):=\A(\bar x):=-\int_{D^2}u^*\om
\eeq
is well-defined. Critical points of $F$ are of the form $[x,u]$ where $x$ is a constant loop and $u$ is some capping which is topologically a sphere in $W$. Thus the only critical points with critical value zero are the constants with trivial capping meaning the map $u$ is in the equivalence class of the constant capping. This set $Z \subset\Crit F$ is again a Morse-Bott component and $\S=\lambda>0$ since $(W,\om)$ is rational. 

Theorem \ref{thm:main_intro} implies that if $H:S^1\times W\to\R$ is a Hamiltonian function with
\beq
||H||_{\H}<\lambda
\eeq
then the number of 1-periodic Hamiltonian orbits of $H$ with action value in $[-\|H\|_H,\|H\|_H]$ is at least $\mathrm{cuplength}(W)+1$. 

This has been proved by Schwarz in \cite{Sch98}. The analogous statement in the Lagrangian case, with the condition $\omega|_{\pi_2}=\lambda\Z$ replaced by $\om|_{\pi_2(W,L)}=\lambda \Z$ for some positive $\lambda$, is due to Liu, cf. \cite{Liu05}.
In Section \ref{sec:Floer}, we phrase and prove Theorems \ref{thm:fixed points} and \ref{thm:chords} in this more general setting. The aspherical case is then included by setting $\lambda=\infty$.

\begin{Rmk}
The same proof can also be applied in more general settings. We only need that some version of Floer homology is defined and the necessary Assumption \ref{Ass} for compactness is satisfied.
For example, this is the case for symplectic manifolds which are convex at infinity or of bounded geometry.
The same method also applies in the setting of non-resonant magnetic flows on tori, where Frauenfelder, Merry and Paternain have developed a Floer homology, see
\cite{FMP13}. Our Morse theoretic proof also applies in this case and gives a lower bound on the number of periodic orbits. More concretely, the proof shows that if the magnetic field on $T^{2N}$is non-resonant in period $\tau$, the number of $\tau$-periodic orbits is at least $2N+1$.
\end{Rmk}

\subsection{Translated points}
\label{sec:intro_translated}
Now we also apply Theorem \ref{thm:main_intro} to translated points in contact geometry. Translated points were introduced by Sandon in \cite{San11} and related to contact rigidity phenomena. In \cite{San13} Sandon conjectures (and proves in certain cases) that the number of translated points of a contactomorphism of a contact manifold $\Sigma$ is at least equal to the minimal number of critical points of a function on $\Sigma$.  We want to explain how Theorem \ref{thm:main_intro} together with the SFT-type compactness result explained in \cite{AFM13} implies under additional assumptions a lower bound on the number of translated points in terms of $\mathrm{cuplength}(\Sigma)$.

Let $(\Sigma,\alpha)$ be a closed contact manifold and $\varphi:\Sigma\to\Sigma$ be a contactomorphism which is contact isotopic to the identity. Note that $\varphi$ does not preserve the contact form $\alpha$, but we have $\varphi^*\alpha=\rho\alpha$ for some function $\rho:\Sigma\to\R_{>0}$. We call a point $q\in\Sigma$ a translated point with time-shift $\eta\in\R$ if
$$
\left
\{\begin{aligned}
\varphi(q)&=\theta^\eta(q)\\
\rho(q)&=1\;,
\end{aligned}
\right.
$$
where $\theta^t$ is the Reeb flow. We point out that the time-shift is not unique if $q$ lies on a closed Reeb orbit. 
We define the symplectization of $\Sigma$ to be $S\Sigma:=\Sigma\times\R_{>0}$ equipped with the symplectic form $\om:=d(r\alpha)$ where $r\in\R_{>0}$. The unperturbed Rabinowitz action functional $\A:\Lambda S\Sigma\times\R\to\R$ is defined as
\beq
\A(x,\eta):=\int_{S^1}ry^*\alpha-\eta\int_0^1 (r-1)dt
\eeq
where $x=(y,r):S^1\to S\Sigma$. Then $(x,\eta)\in\Crit\A$ if and only if $x(S^1)\subset\Sigma\times\{1\}$ and $\dot y=\eta R(y)$. Here $R$ is the Reeb vector field of $\alpha|_\Sigma$, i.e., we can identify a critical point $(x,\eta)$ with an $\eta$-periodic Reeb orbit where negative $\eta$ means that the $-\eta$-periodic Reeb orbit is traversed in the opposite direction and $\eta=0$ corresponds to a constant loop in $\Sigma$. Constant loops in $\Sigma$ are as usual identified with $\Sigma$ itself. Again the set $Z:=\Sigma$ is a Morse-Bott component, see \cite[Lemma 2.12]{AF10}, and the corresponding spectral gap of $F:=\A$ is $\S=$ minimal period of a contractible Reeb orbit. 

Using the function $\rho:\Sigma\to\R_{>0}$ given by $\varphi^*\alpha=\rho\alpha$, we obtain a non-compactly supported Hamiltonian diffeomorphisms $\phi:S\Sigma\to S\Sigma$ by setting $\phi(q,r):=(\varphi(q),\frac{r}{\rho(q)})$.  As explained in \cite{AFM13}, the Rabinowitz action functional $F=\A$ can be perturbed by a certain cut-off of the lift $\phi$ of $\varphi$ in such a way that the critical points of the perturbed functional $h:=\A_\varphi$ correspond to translated points of $\varphi$, see Lemma 3.5 in \cite{AFM13}. This perturbation is supported inside $\Sigma\times[e^{-\kappa(\varphi)},e^{\kappa(\varphi)}]\subset S\Sigma$, where the constant $\kappa(\varphi)$ is specified in Section \ref{sec:translated points}. This implies that
\beq
\|h-F\|\leq e^{\kappa(\varphi)}\|H\|_{\H}
\eeq
where $H:S^1\times\Sigma\to\R$ is any contact Hamiltonian such that the induced contact isotopy $\psi_t$ satisfies $\varphi=\psi_1$. Thus our main theorem implies that if $e^{\kappa(\varphi)}\|H\|_{\H}<\S$ the functional $\A_\varphi$ has at least $\mathrm{cuplength}(\Sigma)+1$ many critical points with critical values in the interval $[-e^{\kappa(\varphi)}\|H\|_{\H},e^{\kappa(\varphi)}\|H\|_{\H}]$, see Theorem \ref{thm:translated_pts}.

This result complements and strengthens the main result in \cite{AM10} which is concerned with the more general notion of leafwise intersections. Translated points of $\varphi$ correspond to leafwise intersections of its lift $\phi$. More concretely, as we work with the symplectization instead of a symplectic filling of $\Sigma$, our bounds depends only on the contact manifold, where is \cite{AM10}, the lower bound was given by a relative cuplength of $\Sigma$ in the filling. We refer the reader to the end of section \ref{sec:translated points} for further details.

\subsection{Solutions to perturbed Dirac-type equations}
Now we also apply Theorem \ref{thm:main_intro} in the hyperk\"ahler setting, which can be viewed as a generalization of Hamiltonian mechanics with multi-dimensional time.

Let the "time"-manifold be $X$ either $T^3$ or $S^3$ and consider a volume form $\mu$ on $X$ and a divergence-free global frame $v_1, v_2, v_3$. For the considered homology theory to be defined, we need special choices of the global frame, which will be specified in Section \ref{sec:hyperkahler}, see also \cite{HNS09}, where the Floer homology in this setting is constructed. 

Let $Y$ be a compact, flat hyperk\"ahler manifold with almost complex structures $I, J, K$. We define symplectic forms $\omega_i$ by choosing a Riemannian metric and choosing $\omega_1(\cdot,\cdot)=\left<\cdot,I\cdot\right>$, $\omega_2(\cdot,\cdot)=\left<\cdot,J\cdot\right>$ and $\omega_3(\cdot,\cdot)=\left<\cdot,K\cdot\right>$.

In this setting, we consider the manifold $M$ to be the space of smooth maps from $f\colon X\to Y$.
The role of the function $F$ is taken by the functional 
\bea
F(f):=\A(f)=-\sum_{l=1}^3\int_{[0,1]\times X} F^\ast\omega_l\wedge i_{v_l}\mu,
\eea
where $F$ is a homotopy from $f$ to a constant map, i.e., an analog of the capping of a loop in Hamiltonian dynamics.

Critical points of $F$ are solutions to the Dirac-type equation
\bea
\partialslash f:=IL_{v_1}f+JL_{v_2}f+KL_{v_3}f=0.
\eea
All solutions are constant and we identify the Morse-Bott component $Z=Y$, see \cite[Lemmas 2.5 \& 3.7]{HNS09}. As in the case of classical Hamiltonian dynamics described above, we consider a "time"-dependent Hamiltonian on $Y$. That is, the Hamiltonian is a function $H\colon X\times Y\to\R$ and use this to define the Hamiltonian perturbation of $F$ as the functional
\bea
h(f):=\A_H(f)=-\sum_{l=1}^3\int_{[0,1]\times X} F^\ast\omega_l\wedge i_{v_l}\mu-\int_XH(x,f(x))\mu.
\eea
For this functional, critical points are solutions to the equation
\bea
\label{perturbed Dirac-type}
\partialslash f=\nabla H(f).
\eea
Using Fourier analysis in the form of the Peter-Weyl theorem, it was proved in \cite{GH12} that the number of solutions to equation \eqref{perturbed Dirac-type} is bounded below by $\mathrm{cuplength}(Y)+1$ and this result is reproduced by our Morse theoretic proof, see Theorem \ref{thm:cuplength-Dirac}.

\begin{Rmk}
The action functional $\A$ can be defined in a more general setting, see \cite{GH13}, and it is a generalization of the action functional of classical Hamiltonian dynamics. Also the cuplength estimate for the number of critical points holds in this more general situation. However, for our Morse theoretic proof to apply, we need to work in the restricted setting described above as the hyperk\"ahler Floer homology is only defined in this special case.
\end{Rmk}

\section{Preliminaries}
\label{prelim}

In this section, we describe the setting used in the proofs of existence of critical points in the Morse case in more detail. Furthermore, we establish a number of estimates on the energy and the Morse function along trajectories. These estimates are used to show compactness of the moduli spaces considered in the proofs.

\subsection{Setting and basic assumptions}
\label{sec:Setting}

Let $M$ be a manifold, not necessarily compact, and $F\colon M\to\R$ be a smooth function. We assume that $Z\subset M$ is a closed (i.e.~compact and without boundary), connected submanifold satisfying $Z\subset\Crit F$ and $F$ is Morse-Bott along $Z$, that is, for all $z\in Z$ we have
\beq
\ker \Hess_z F=T_zZ\;.
\eeq
For convenience of notation, we also assume that $F|_Z=0$. Furthermore, we assume that the spectral gap $\S$ of $F$ with respect to $Z$ is positive, i.e.
\beq
\S\equiv\S(F,Z):=\inf\{|F(x)|\colon x\in\Crit F\setminus Z\}>0\;.
\eeq
If $F$ does not have critical points outside of $Z$, we set $\S:=\infty$.

%

\begin{Conv}\label{def:beta}
Now define a family of auxiliary functions as follows, see Figure \ref{pic:beta} below.
We first fix two smooth functions $\beta^\pm_\infty\in C^\infty(\R,[0,1])$ satisfying
\begin{enumerate}
\item $\beta^+_\infty(s)=0$ for $s\leq-1$, $\beta^+_\infty(s)=1$ for $s\geq0$ and $\beta^+_\infty$ is monotone increasing,
\item $\beta^-_\infty(s)=1$ for $s\leq0$, $\beta^-_\infty(s)=0$ for $s\geq1$ and $\beta^-_\infty$ is monotone decreasing.
\end{enumerate}
For $k\in\N$ and $r\geq 0$, we fix a smooth family of functions $\beta_r\in C^\infty(\R,[0,1])$ satisfying
\begin{enumerate}
\setcounter{enumi}{2}
\item $\beta_r(s)=0$ for $s\leq-1$ and $s\geq (k+1)r+1$ 
\item $0\leq\beta'_r(s)\leq2$ on $(-1,0)$ and $0\geq\beta'_r(s)\geq-2$ on $((k+1)r,\,(k+1)r+1)$,
\item for $r\geq1$:  $\beta_r(s)=1$ for $s\in[0,(k+1)r]$,
\item for $r\leq1$: $\beta_r'(s)=0$ for $s\in[0,(k+1)r]$, $\lim_{r\to0}\beta_r=0$ in the strong $C^\infty$-topology,\item $\lim_{r\to\infty}\beta_r(s)=\beta_\infty^+(s)$ and $\lim_{r\to\infty}\beta_r(s+(k+1)r)=\beta_\infty^-(s)$ in the $C^\infty_{loc}$ topology.
\end{enumerate} 
\end{Conv}
Later in the proofs, we will specify the value of $k$, but for now, we prove all Lemmas in the general case, where $k$ is any natural number.
\begin{figure}[ht] \label{pic:beta}
\def\svgwidth{80ex} 
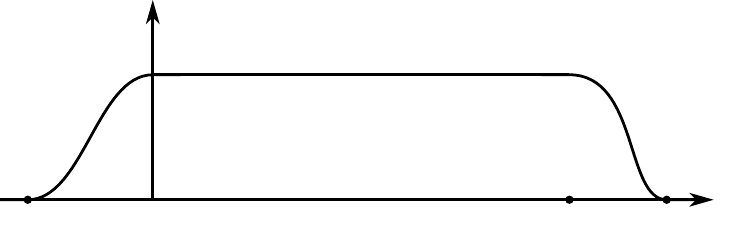
\caption{The function $\beta_r$.}\label{fig:Cn2}
\end{figure}
Let $g$ be a Riemannian metric on $M$ and consider a smooth function $h\colon M\to\R$.
We define the functions 
\beq\label{def_G_rs}
G_{r,s}(x):=\beta_r(s)h(x)+(1-\beta_r(s))F(x) \quad r\geq 0, s\in\R
\eeq 
and the moduli space
\beq
\M:=\Big\{(r,\gamma)\mid r\geq0,\,\gamma\colon\R\to M\text{ satisfies } \gamma'(s)+\nabla^gG_{r,s}(\gamma(s))=0 \text{ and }E(\gamma)<\infty\Big\}.
\eeq
Here, the energy of a gradient flow line is given by 
\beq
E(\gamma):=\int_{-\infty}^\infty|\gamma'(s)|^2ds.
\eeq
Furthermore, we define
\beq
\M_{[0,R]}(Z):=\{(r,\gamma)\in\M\mid r\in[0,R]\text{ and } \lim_{s\to\pm\infty}\gamma(s)\in Z\}\;.
\eeq
In order to guarantee compactness of this moduli space in the $W^{1,2}$-topology, we need to make the following crucial assumption.
This will be satisfied not only in the case of classical Morse-Bott case for suitable Morse-Bott functions as described above, but also in the infinite-dimensional cases of various Floer theories.
Namely, we need to ensure $C^\infty_{loc}$-compactness of the space of trajectories $\gamma$ with finite energy together with all reparametrizations by shifts.

\begin{Assump}
\label{Ass}
For all sequences $\{(r_n,\gamma_n)\}_{n\in\N}\in\M$ and $\{s_n\}_{n\in\N}\subset\R$, the sequence
\beq
\{\gamma_n(\cdot-s_n)\}_{n\in\N}
\eeq
converges (up to taking subsequences) in $C^\infty_{loc}(\R,M)$.
\end{Assump}

\begin{Rmk}
Even though we do not use this in our article, we point out that given this assumption, the Morse cohomology of $F$ is well-defined, see \cite{CF09}. 
Since the manifold $M$ is not necessarily compact, the above compactness assumption is not automatic. 
It is not hard to verify this assumption for Morse-Bott functions $F$ on compact manifolds using the flow equation in the definition of $\M$ and the Arzel\'a-Ascoli theorem.
In the Floer cases, a similar argument using the Floer equation shows the assumption for the analog moduli spaces of Floer trajectories. Detailed arguments for the Floer cases will be given in Section \ref{sec:Ham fixed points}.

In finite dimensions some rather standard assumptions will guarantee Assumption \ref{Ass}. For instance, the assumption holds if the function $F$ is proper and bounded from below, so that sublevel sets are actually compact. This is a condition on $F$ at infinity. For Floer-theoretic generalizations Assumption \ref{Ass} needs additional arguments to exclude bubbling depending on the specific situation.
\end{Rmk}

\subsection{Estimates for Morse-Bott functions}
In this section, we use Assumption \ref{Ass} and the Morse-Bott property to prove a number of estimates along elements from $\M$ and $\M_{[0,R]}(Z)$. Several of the energy computations will also be used later for other estimates needed in the proof of the main theorem. The first is an estimate for the values of $G_{r,s}$ along the flow lines in terms of the oscillation of $h-F$. We recall that the oscillation of a function $f:M\to\R$ is
\beq
||f||:=\sup f-\inf f
\eeq
which is only a seminorm since $||f||=0$ is equivalent to $f$ being constant.
%
%
\begin{Lemma}
\label{bound}
Let $(r,\gamma)\in\M_{[0,R]}(Z)$. Then for all $s\in\R$, we have
\bea
\big|G_{r,s}(\gamma(s))\big|\leq \|h-F\|.
\eea
\end{Lemma}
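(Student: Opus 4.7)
The plan is to use the fundamental theorem of calculus applied to $s\mapsto G_{r,s}(\gamma(s))$ along the trajectory, combined with the structure of $\beta_r$, to express this quantity as an integral that is bounded by the oscillation of $h-F$.

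First I would differentiate using the chain rule and the gradient-flow equation $\gamma'(s)=-\nabla^gG_{r,s}(\gamma(s))$ to obtain
\[\frac{d}{ds}G_{r,s}(\gamma(s))\;=\;\beta_r'(s)(h-F)(\gamma(s))\;-\;|\gamma'(s)|_g^2,\]
since $\partial_sG_{r,s}=\beta_r'(h-F)$ (from the definition of $G_{r,s}$) and $dG_{r,s}(\gamma')=\langle\nabla^g G_{r,s},\gamma'\rangle_g=-|\gamma'|^2$. Next, the asymptotic values vanish: since $\gamma(\pm\infty)\in Z$, $F|_Z=0$, and $\beta_r(\pm\infty)=0$ by property~(3) of the family $\beta_r$, one has $G_{r,\pm\infty}(\gamma(\pm\infty))=F(\gamma(\pm\infty))=0$. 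Integrating the derivative identity from $-\infty$ to $s$ and from $s$ to $+\infty$ then produces the two identities
\[G_{r,s}(\gamma(s))\;=\;\int_{-\infty}^s \beta_r'(\sigma)(h-F)(\gamma(\sigma))\,d\sigma\;-\;\int_{-\infty}^s |\gamma'(\sigma)|^2\,d\sigma,\]
\[G_{r,s}(\gamma(s))\;=\;-\int_s^{+\infty} \beta_r'(\sigma)(h-F)(\gamma(\sigma))\,d\sigma\;+\;\int_s^{+\infty} |\gamma'(\sigma)|^2\,d\sigma.\]

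Since $|\gamma'|^2\geq 0$, the first identity yields the upper bound $G_{r,s}(\gamma(s))\leq\int_{-\infty}^s\beta_r'(\sigma)(h-F)(\gamma(\sigma))\,d\sigma$, while the second yields the matching lower bound $G_{r,s}(\gamma(s))\geq -\int_s^{+\infty}\beta_r'(\sigma)(h-F)(\gamma(\sigma))\,d\sigma$. It therefore suffices to bound each of these integrals by $\|h-F\|$ in absolute value. By the properties of $\beta_r$, its derivative is non-negative on $(-1,0)$ and non-positive on $((k+1)r,(k+1)r+1)$, with total positive and negative mass each at most $1$. Decomposing $\beta_r'=(\beta_r')^+-(\beta_r')^-$ and applying the two-sided oscillation bound $\inf(h-F)\leq (h-F)(\gamma(\sigma))\leq\sup(h-F)$ on each supporting subinterval then yields the required estimate by $\|h-F\|=\sup(h-F)-\inf(h-F)$.

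The main obstacle is this last step: because $\beta_r'$ changes sign, a crude pointwise estimate of $(h-F)(\gamma)$ would produce only the sup-norm of $h-F$, not the oscillation. The resolution is to exploit the cancellation between the positive and negative parts of $\beta_r'$, or equivalently to subtract from $h-F$ a suitable reference constant (for instance the midpoint of its range) so that the remainder has pointwise size at most $\frac{1}{2}\|h-F\|$ while the subtracted constant contributes only a controllable multiple of $\beta_r(s)\leq 1$; balancing the two contributions gives the sharp bound $\|h-F\|$.
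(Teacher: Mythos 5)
Your argument is essentially identical to the paper's proof of Lemma \ref{bound}: you derive the same two identities \eqref{bound G up} and \eqref{bound G low} by integrating $\tfrac{d}{ds}G_{r,s}(\gamma(s))$ via the flow equation and the vanishing of $G_{r,\pm\infty}$ at the asymptotic limits in $Z$, drop the nonnegative energy terms, and then estimate the remaining $\beta_r'$-weighted integrals of $(h-F)\circ\gamma$ using the sign of $\beta_r'$ on $(-1,0)$ and on $((k+1)r,(k+1)r+1)$. The ``obstacle'' you raise at the end is handled in the paper precisely by this sign splitting (bounding $h-F$ by $\sup(h-F)$ where $\beta_r'\geq 0$ and by $\inf(h-F)$ where $\beta_r'\leq 0$, each total mass being at most $1$), which is exactly the estimate you already wrote down, so the midpoint-subtraction variant is not needed.
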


\begin{proof}
As a first step, we compute for $s\in\R$ the energy of part of the gradient flow line. 
\bea
\label{bound G up}
0\leq E_s(\gamma)&:=\int_s^\infty|\gamma'(t)|^2dt\\
&=-\int_s^\infty\langle\nabla^gG_{r,t}(\gamma(t)),\gamma'(t)\rangle dt\\
&=-\int_s^\infty dG_{r,t}(\gamma(t))[\gamma'(t)]dt\\
&=-\int_s^\infty\frac{d}{dt}G_{r,t}(\gamma(t))dt+\int_s^\infty\frac{\p G_{r,t}}{\p t}(\gamma(t))dt\\
&=-\underbrace{G_{r,+\infty}(\gamma(+\infty))}_{=F(\gamma(+\infty))=0}+G_{r,s}(\gamma(s))+\int_s^\infty\beta_r'(t)\big[h-F\big](\gamma(t))dt\\
&=G_{r,s}(\gamma(s))+\int_s^\infty\beta_r'(t)\big[h-F\big](\gamma(t))dt\;.
\eea 
Similarly, we compute the energy for the front end of the trajectory:

\bea
\label{bound G low}
0\leq E^s(\gamma)&:=\int_{-\infty}^s|\gamma'(t)|^2dt\\
&=\int_{-\infty}^s\langle \gamma'(t),\gamma'(t)\rangle dt\\
&=-\int_{-\infty}^s\langle\nabla^gG_{r,t}(\gamma(s)),\gamma'(t)\rangle dt\\
&=-\int_{-\infty}^s dG_{r,t}(\gamma(t))[\gamma'(s)]dt\\
&=-\int_{-\infty}^s\frac{d}{dt}G_{r,t}(\gamma(t))dt+\int_{-\infty}^s\frac{\p G_{r,t}}{\p t}(\gamma(t))dt\\
&=-G_{r,s}(\gamma(s))+\underbrace{G_{r,-\infty}(\gamma(-\infty))}_{=F(\gamma(-\infty))=0}+\int_{-\infty}^s\beta_r'(t)\big[h-F\big](\gamma(t))dt\\
&=-G_{r,s}(\gamma(s))+\int_{-\infty}^s\beta_r'(t)\big[h-F\big](\gamma(t))dt.
\eea 
Now we can estimate the integral terms to conclude the desired bounds. Since $\beta_r'(t)\neq0$ only for $t\in[-1,0]\cup[(k+1)r,(k+1)r+1]$ with positive resp.~negative sign we  find
\bea
\label{comp:estimate}
G_{r,s}(\gamma(s))&\geq-\int_s^\infty\beta_r'(t)\big[h-F\big](\gamma(t))dt\\ 
&=-\int_{-1}^0\mathbbm{1}_{[t,\infty)}\beta_r'(t)\big[h-F\big](\gamma(t))dt-\int_{(k+1)r}^{(k+1)r+1}\mathbbm{1}_{[t,\infty)}\beta_r'(t)\big[h-F\big](\gamma(t))dt\\
&\geq-\int_{-1}^0\mathbbm{1}_{[t,\infty)}\beta_r'(t)\sup_{x\in M}(h(x)-F(x))dt-\int_{(k+1)r}^{(k+1)r+1}\mathbbm{1}_{[t,\infty)}\beta_r'(t)\inf_{x\in M}(h(x)-F(x))dt\\
&\geq-\int_{-1}^0\beta_r'(t)\sup_{x\in M}(h(x)-F(x))dt-\int_{(k+1)r}^{(k+1)r+1}\beta_r'(t)\inf_{x\in M}(h(x)-F(x))dt\\
&\geq-\sup_{x\in M}(h(x)-F(x))+\inf_{x\in M}(h(x)-F(x))\\
&= -\|h-F\|
\eea
Using now \eqref{bound G low} instead of \eqref{bound G up}, we can use an analogous argument to show
\bea
 G_{r,s}(\gamma(s))\leq \|h-F\|\;.
\eea
This proves the lemma.
\end{proof}

The computations \eqref{bound G up} and \eqref{bound G low} in this proof give rise to the following energy estimates for trajectories in $\M_{[0,R]}(Z)$. Namely, consider \eqref{bound G low} for large $s$. Then we have $G_{r,s}=F$ and $\gamma(s)\stackrel{s\to\pm\infty}{\xrightarrow{\hspace*{5ex}} } Z$ and thus
\bea
E^s(\gamma)&=-F(\gamma(s))+\int_{-\infty}^s\beta_r'(t)\big[h-F\big](\gamma(t))dt\\
&=-F(\gamma(s))+\int_{-1}^0\beta_r'(t)\big[h-F\big](\gamma(t))dt+\int_{(k+1)r}^{(k+1)r+1}\beta_r'(t)\big[h-F\big](\gamma(t))dt\\
\eea
as these are the only intervals with $\beta'\neq0$. Then the signs of $\beta'$ in these intervals show that
\bea
\label{energy}
0\leq E(\gamma)\leq\sup_{x\in M}(h(x)-F(x))-\inf_{x\in M}(h(x)-F(x))=\|h-F\|.
\eea
The first term in the computation of the energy vanishes in the limit as $\gamma(s)\to Z\subset F^{-1}(0)$.
This energy estimate will be used in the Floer cases to exclude bubbling-off in the proof of compactness of the moduli spaces analog to $\M_{[0,R]}(Z)$ in the Morse case.

To prove below that trajectories cannot leave a certain neighborhood of $Z$, we will need an estimate of the function $F$ along the trajectories in $\M_{[0,R]}(Z)$.
Here, we use some of the above computations to show the following.
\begin{Lemma}
\label{bound F}
Along curves $\gamma$ with $(\gamma,r)\in\M_{[0,R]}(Z)$, we have the following estimate:
\beq
\big|F\big(\gamma(s)\big)\big|\leq\|h-F\|
\eeq
for all $s\in\R$.
\end{Lemma}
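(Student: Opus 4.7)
The plan is to deduce the bound from Lemma~\ref{bound} applied to two well-chosen constant shifts of $h$ that sandwich $F$ between auxiliary functions.

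The first step will be to exploit that the flow equation depends on $h$ only through $\nabla h$, so replacing $h$ by $h-c$ for any constant $c\in\R$ leaves the moduli space $\M_{[0,R]}(Z)$, the trajectory $\gamma$, the value $F(\gamma(s))$, and the oscillation $\|h-F\|$ all invariant. I would apply this with the two specific shifts
\[
h^+:=h-\inf_M(h-F)\qquad\text{and}\qquad h^-:=h-\sup_M(h-F)
\]
(both finite in the only nontrivial case $\|h-F\|<\infty$), together with the corresponding interpolations $G^\pm_{r,s}:=(1-\beta_r)F+\beta_r h^\pm$. By construction $h^+-F\geq 0$ and $h^--F\leq 0$ pointwise on $M$, and combined with $\beta_r(s)\in[0,1]$ this yields the pointwise sandwich
\[
G^-_{r,s}(x)\;=\;F(x)+\beta_r(s)(h^--F)(x)\;\leq\;F(x)\;\leq\;F(x)+\beta_r(s)(h^+-F)(x)\;=\;G^+_{r,s}(x)
\]
for every $x\in M$ and $s\in\R$.

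The second step will be to apply Lemma~\ref{bound} with $h^+$ and $h^-$ in place of $h$; this is legitimate because $(r,\gamma)\in\M_{[0,R]}(Z)$ is unchanged by the shifts and $\|h^\pm-F\|=\|h-F\|$, so the lemma produces $|G^\pm_{r,s}(\gamma(s))|\leq\|h-F\|$. Evaluating the sandwich at $x=\gamma(s)$ and combining with these two bounds then gives
\[
-\|h-F\|\;\leq\;G^-_{r,s}(\gamma(s))\;\leq\;F(\gamma(s))\;\leq\;G^+_{r,s}(\gamma(s))\;\leq\;\|h-F\|,
\]
which is the desired estimate.

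The main conceptual step — really the only nontrivial one — is to recognize the sandwich $G^-_{r,s}\leq F\leq G^+_{r,s}$; once this is in place, transferring the bound on $G^\pm_{r,s}$ from Lemma~\ref{bound} to a bound on $F$ is purely bookkeeping, so I do not expect any significant obstacle.
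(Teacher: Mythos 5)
Your proof is correct, and it takes a genuinely different route from the paper. The paper does not invoke Lemma \ref{bound} as a black box; instead it reuses the two energy identities \eqref{bound G up} and \eqref{bound G low} from that lemma's proof, splits into the cases $s\leq (k+1)r$ and $s\geq (k+1)r$, estimates the integrals $\int\beta_r'(t)[h-F](\gamma(t))\,dt$ by $\sup(h-F)$ or $\inf(h-F)$ according to the sign of $\beta_r'$ on each interval, and then solves for $F(\gamma(s))$ from the expansion $G_{r,s}=\beta_r(s)[h-F]+F$. You instead exploit the gauge freedom $h\mapsto h-c$: since $G_{r,s}$ changes only by the $x$-independent term $c\,\beta_r(s)$, the flow equation, the moduli space, the energy, and the oscillation $\|h-F\|$ are all unchanged, so Lemma \ref{bound} applies verbatim to $h^\pm=h-\inf(h-F)$ and $h-\sup(h-F)$; the sign conditions $h^+-F\geq 0$, $h^--F\leq 0$ together with $\beta_r(s)\in[0,1]$ give the pointwise sandwich $G^-_{r,s}\leq F\leq G^+_{r,s}$, and the bound on $F(\gamma(s))$ follows. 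Your argument is shorter, avoids the case analysis and sign bookkeeping, and makes the constant-shift invariance explicit; the paper's direct computation yields slightly sharper intermediate inequalities (involving the factor $\beta_r(s)$), but these are not used elsewhere, so your reduction fully serves the purpose the lemma has in the paper. One point worth stating explicitly if you write this up: the finiteness of $\inf(h-F)$ and $\sup(h-F)$, which you correctly dispose of by noting the lemma is vacuous when $\|h-F\|=\infty$.
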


\begin{proof}
We start with the case that $s\leq (k+1)r$ and use estimate \eqref{bound G low}.
\bea
G_{r,s}(\gamma(s))&\leq\int_{-\infty}^s\underbrace{\beta_r'(t)}_{\geq0}\big[h-F\big](\gamma(t))dt\\ 
&\leq\int_{-\infty}^s\beta_r'(t)\sup(h-F)dt\\ 
&=\beta_r(s)\sup(h-F)
\eea
Now we expand the expression of $G_{r,s}$ (see \eqref{def_G_rs}) to obtain
\bea
\beta_r(s)\sup(h-F)&\geq G_{r,s}(\gamma(s))\\
&=\beta_r(s)h(\gamma(s))+(1-\beta_r(s))F(\gamma(s))\\
&=\beta_r(s)\big[h-F\big](\gamma(s))+F(\gamma(s))\\
\eea
and thus
\bea
F(\gamma(s))&\leq\beta_r(s)\Big\{\sup(h-F)-\big[h-F\big](\gamma(s))\Big\}\\
&\leq\underbrace{\beta_r(s)}_{0\leq\bullet\leq1}\Big\{\sup(h-F)-\inf(h-F)\Big\}\\
&\leq||h-F||\;.
\eea
Using estimate \eqref{bound G up} instead of \eqref{bound G low} we derive
\bea
G_{r,s}(\gamma(s))&\geq-\int_s^\infty\beta_r'(t)\big[h-F\big](\gamma(t))dt\\
&=-\int_s^{(k+1)r}\underbrace{\beta_r'(t)}_{\geq0}\big[h-F\big](\gamma(t))dt-\int_{(k+1)r}^\infty\underbrace{\beta_r'(t)}_{\leq0}\big[h-F\big](\gamma(t))dt\\
&\geq-\int_s^{(k+1)r}\beta_r'(t)\sup(h-F)dt-\int_{(k+1)r}^\infty\beta_r'(t)\inf(h-F)dt\\
&=\big(\beta_r(s)-1\big)\sup(h-F)+\inf(h-F)\\
&=\beta_r(s)\sup(h-F)-||h-F||
\eea
and continue as above
\beq
\beta_r(s)\sup(h-F)-||h-F||\leq G_{r,s}(\gamma(s))=\beta_r(s)\big[h-F\big](\gamma(s))+F(\gamma(s))\\
\eeq
to obtain
\bea
F(\gamma(s))&\geq\beta_r(s)\sup(h-F)-||h-F|| -\beta_r(s)\big[h-F\big](\gamma(s))\\
&=\underbrace{\beta_r(s)}_{\geq0}\Big\{\underbrace{\sup(h-F)-\big[h-F\big](\gamma(s))}_{\geq0}\Big\}-||h-F||\\
&\geq-||h-F||\;.
\eea
This proves the required estimate $|F(\gamma(s))|\leq\|h-F\|$ for $s\leq (k+1)r$. We now will prove the same estimate for $s\geq(k+1)r$. We start with \eqref{bound G up} and get
\bea
G_{r,s}(\gamma(s))&\geq-\int_s^\infty\underbrace{\beta_r'(t)}_{\leq0}\big[h-F\big](\gamma(t))dt\\
&\geq\beta_r(s)\inf(h-F)
\eea
and therefore
\beq
\beta_r(s)\inf(h-F)\leq G_{r,s}(\gamma(s))=\beta_r(s)\big[h-F\big](\gamma(s))+F(\gamma(s))\;.
\eeq
leading to
\bea
F(\gamma(s))&\geq\beta_r(s)\Big\{\inf(h-F)-\big[h-F\big](\gamma(s))\Big\}\\
&\geq-\beta_r(s)||h-F||\\
&\geq-||h-F||\;.
\eea
Using \eqref{bound G low} we obtain
\bea
G_{r,s}(\gamma(s))&\leq\int_{-\infty}^s\beta_r'(t)\big[h-F\big](\gamma(t))dt\\ 
&=\int_{-\infty}^{(k+1)r}\underbrace{\beta_r'(t)}_{\geq0}\big[h-F\big](\gamma(t))dt+\int_{(k+1)r}^s\underbrace{\beta_r'(t)}_{\leq0}\big[h-F\big](\gamma(t))dt\\ 
&\leq \sup(h-F)+\big(\beta_r(s)-1\big)\inf(h-F)
\eea
thus
\beq
\sup(h-F)+\big(\beta_r(s)-1\big)\inf(h-F)\geq G_{r,s}(\gamma(s))=\beta_r(s)\big[h-F\big](\gamma(s))+F(\gamma(s))
\eeq
and
\bea
F(\gamma(s))&\leq||h-F||+\underbrace{\beta_r(s)}_{\geq0}\Big\{\underbrace{\inf(h-F)-\big[h-F\big](\gamma(s))}_{\leq0}\Big\}\\
&\leq||h-F||\;.
\eea
This finishes the proof.
\end{proof}

For the following we define the energy density of $F$ by
\beq
e(x):=\left<\nabla F(x),\, \nabla F(x)\right>\text{ for } x\in M\;.
\eeq
\begin{Lemma}[Action-energy estimate]
\label{Morse-Bott-action-energy}
If the function $F$ is Morse-Bott along the closed critical submanifold $Z$ it satisfies an action-energy estimate, i.e.~there exists a tubular neighborhood $U_Z$ of $Z$ and a constant $C>0$ such that
\beq
|F(x)|\leq C\ e(x)\quad\forall x\in U_Z.
\eeq
\end{Lemma}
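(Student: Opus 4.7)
The plan is to reduce to a local model on the normal bundle of $Z$, where both $F$ and $e$ become quadratic to leading order, and then use the Morse-Bott non-degeneracy together with the compactness of $Z$ to make the bounds uniform.

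More concretely, first I would invoke the tubular neighborhood theorem to identify an open neighborhood of $Z$ with an open neighborhood $V$ of the zero section in the normal bundle $\pi\colon\nu Z\to Z$. Pick fiber coordinates $v$ with respect to a bundle metric, so that a point $x\in U_Z$ corresponds to a pair $(z,v)$ with $|v|$ small. Since $F|_Z=0$ and $dF|_Z=0$, a Taylor expansion in the fiber direction around each $z\in Z$ gives
\beq
F(z,v)=\tfrac{1}{2}\langle A_z v,v\rangle+R(z,v),\qquad R(z,v)=O(|v|^3),
\eeq
where $A_z=\Hess_z F|_{N_zZ}$ is the normal Hessian. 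By the Morse-Bott hypothesis $\ker\Hess_z F=T_zZ$, so $A_z$ is invertible for every $z\in Z$, and by compactness of $Z$ there exists $c_0>0$ with $\langle A_z v,v\rangle\geq c_0|v|^2$ (up to sign of the eigenvalues, which does not affect $|F|$) as well as a uniform upper bound $\|A_z\|\leq C_0$.

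Next I would differentiate: the gradient $\nabla^g F$ in the tubular coordinates satisfies $\nabla^g F(z,v)=A_z v+O(|v|^2)$ in the normal component and $O(|v|^2)$ in the tangential component (because $F$ restricted to $Z$ vanishes and its first-order derivatives along $Z$ at $v=0$ also vanish). Since the Riemannian metric $g$ restricted to the normal directions at $Z$ is comparable to the fiber metric (again uniformly in $z$ by compactness), I get
\beq
e(z,v)=|\nabla^g F(z,v)|_g^2\geq c_1|v|^2
\eeq
for all $(z,v)$ in a sufficiently small neighborhood of the zero section, while the Taylor expansion gives $|F(z,v)|\leq C_1|v|^2$ in the same neighborhood. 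Combining these two estimates yields $|F(x)|\leq (C_1/c_1)\,e(x)$ on a (possibly shrunk) tubular neighborhood $U_Z$, which is the desired inequality with $C:=C_1/c_1$.

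The main step requiring some care is obtaining the lower bound on $e$: one has to ensure that the leading linear term $A_zv$ in $\nabla^g F$ really dominates the quadratic remainder uniformly in $z\in Z$. This is where compactness of $Z$ is essential, allowing one to choose the neighborhood small enough that the $O(|v|^2)$ errors are absorbed into a fraction of $c_0|v|^2/2$ simultaneously for all base points $z$. Everything else is essentially bookkeeping with Taylor's formula in the tubular coordinates.
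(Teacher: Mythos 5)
Your proof is correct and follows essentially the same route as the paper: identify a tubular neighborhood with the normal bundle, Taylor-expand $F$ and $\nabla F$ in the fiber variable, and use the non-degenerate normal Hessian together with compactness of $Z$ to get $|F|\leq C_1|v|^2$ and $e\geq c_1|v|^2$ uniformly, then take the quotient. One small wording caveat: since the normal Hessian may be indefinite, the uniform bound you actually need (and implicitly use in the dominance argument) is $|A_zv|\geq c_0|v|$ from invertibility, not $\langle A_zv,v\rangle\geq c_0|v|^2$ ``up to sign''.
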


Recall that $F|_Z=0$ and that this holds in the Floer cases due to the isoperimetric inequality and H\"older.

\begin{proof}
We identify a neighborhood $U$ of $Z$ with a neighborhood of the zero-section in the normal bundle of $Z$ in $M$. We write $x=(z,y)$ for $x\in U$ with $z\in Z$ and $y$ being the normal component.
Now we estimate $e(x)$ and $F(x)$ separately.
For $e(x)$, we use the Morse-Bott condition to find for $y$ sufficiently small
\bea
e(x)&=\left<\nabla F(x),\, \nabla F(x)\right>\\
&=|\Hess\, F(z)(y)|^2+O(|y|^3)\\
&\geq C_1\,|y|^2+O(|y|^3)\\
&\geq C_2\left(1-|y|\right)|y|^2\;.
\eea
Similarly, we find for $F(x)$, using again the Morse-Bott condition, $F(Z)=0$, and $y$ sufficiently small
\bea
F(x)&=\tfrac{1}{2}D^2F(z)(y,y)+O(|y|^3)\\
\Rightarrow |F(x)|&\leq C_3\,|y|^2+O(|y|^3)\\
&\leq C_4\left(1+|y|\right)|y|^2.
\eea
We conclude that
\bea
|F(x)|\leq \frac{C_4\left(1+|y|\right)}{C_2\left(1-|y|\right)}\ e(x)\leq C\ e(x)
\eea
holds on a neighborhood $U_Z$.
\end{proof}

\begin{Lemma}
\label{exp convergence}
We fix $\gamma\in\M$ and assume that there is a sequence $s_n\to\infty$ such that 
\beq
\lim_{n\to\infty}\gamma(s_n)=z^+\in Z.
\eeq
 Moreover, we require that there exists $\bar{s}\in\R$ such that 
\beq
\gamma(s)\in U_Z\quad\forall s\geq\bar{s}\;.
\eeq
Then the gradient flow line $\gamma$ converges exponentially fast to $z^+$. I.e.~for $s\geq\bar{s}$ we find constants $A\geq0$ and $B>0$ such that the estimate
\bea
d(z^+,\gamma(s))\leq \sqrt{F(\gamma(\bar{s}))}\,Ae^{-Bs}
\eea
holds for all $s\geq\bar{s}$. We point out that the constants $A,B$ are independent of $\gamma$. An analogous statement for $s_n\to-\infty$ holds.
\end{Lemma}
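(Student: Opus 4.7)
The plan is to reduce the claim to a one-dimensional Gronwall argument for $f(s):=F(\gamma(s))$. By Convention~\ref{def:beta}, $\beta_r$ vanishes on $[(k+1)r+1,\infty)$, so after enlarging $\bar s$ if necessary so that $\bar s\geq (k+1)r+1$, the flow equation on $[\bar s,\infty)$ reduces to $\gamma'(s)=-\nabla F(\gamma(s))$. Consequently $f'(s)=-e(\gamma(s))\leq 0$ there and $f$ is monotone non-increasing on $[\bar s,\infty)$. Combined with $f(s_n)\to F(z^+)=0$, this forces $f(s)\searrow 0$ as $s\to\infty$, and in particular $f(s)\geq 0$ for all $s\geq\bar s$.

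Next I would invoke the Morse-Bott action-energy inequality of Lemma~\ref{Morse-Bott-action-energy}, which on $U_Z$ reads $f(s)\leq C\,e(\gamma(s))=-C\,f'(s)$. If $f(s_0)=0$ for some $s_0\geq\bar s$, then $\gamma(s_0)\in\Crit F$ with $F(\gamma(s_0))=0$ and monotonicity gives $f\equiv 0$ on $[s_0,\infty)$, so $\gamma\equiv z^+$ there by uniqueness of flow lines and the claim is trivial. Otherwise $f>0$ on $[\bar s,\infty)$ and $\phi(s):=\sqrt{f(s)}$ is smooth with
\[
\phi'(s)=\frac{-e(\gamma(s))}{2\phi(s)}\leq -\frac{\phi(s)}{2C}.
\]
Integrating yields the exponential decay $\phi(s)\leq\phi(\bar s)\,e^{-(s-\bar s)/(2C)}$ for all $s\geq\bar s$.

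To translate this scalar decay into a distance estimate on the trajectory, I apply Cauchy--Schwarz on unit intervals, using $\int_s^{s+1}|\gamma'(t)|^2\,dt=f(s)-f(s+1)\leq f(s)$, to obtain $\int_s^{s+1}|\gamma'(t)|\,dt\leq\phi(s)$. Summing a geometric series in the integer shifts gives
\[
\int_s^\infty|\gamma'(t)|\,dt\leq\frac{\phi(s)}{1-e^{-1/(2C)}}\leq\frac{\sqrt{F(\gamma(\bar s))}}{1-e^{-1/(2C)}}\,e^{(\bar s-s)/(2C)}.
\]
Since $\gamma(s_n)\to z^+$ and the tail of $\gamma$ has finite length, the triangle inequality (applied to $\gamma(s)$, $\gamma(s_n)$, and $z^+$ for $s_n\geq s$, then letting $n\to\infty$) yields $d(z^+,\gamma(s))\leq\int_s^\infty|\gamma'(t)|\,dt$, establishing the desired bound with $B=1/(2C)$ and $A=\bigl(1-e^{-1/(2C)}\bigr)^{-1}e^{\bar s/(2C)}$; crucially $B$ depends only on the Morse-Bott geometry of $F$ near $Z$, not on $\gamma$.

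The main technical point is the passage from scalar exponential decay of $f$ to a length-type estimate for $\gamma$; the Cauchy--Schwarz trick on unit intervals handles this cleanly and avoids second-order estimates. The case $s_n\to-\infty$ is symmetric: by Convention~\ref{def:beta} $\beta_r$ vanishes on $(-\infty,-1]$, so taking $\bar s\leq -1$ the flow again reduces to $\gamma'=-\nabla F$ and running the same argument with $s\mapsto-s$ yields the analogous exponential estimate.
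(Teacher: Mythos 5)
Your argument is correct, and it rests on the same two ingredients as the paper's proof: the Morse--Bott action--energy inequality of Lemma \ref{Morse-Bott-action-energy} and the resulting differential inequality for $f(s)=F(\gamma(s))$ along the negative gradient flow of $F$ (like you, the paper tacitly uses $\gamma'=-\nabla F$ on the relevant range; your explicit enlargement of $\bar s$ past $(k+1)r+1$ makes this honest). Where you genuinely differ is in converting decay of $F$ into decay of distance. The paper does not sum a series: it bounds the tail length pointwise via $|\gamma'|=|\nabla F|^2/\sqrt{e}\le\sqrt{C}\,|\nabla F|^2/\sqrt{F}=-2\sqrt{C}\,\tfrac{d}{dt}\sqrt{F}$, integrates from $s$ to $s_n$, and obtains the intermediate estimate $d(z^+,\gamma(s))\le 2\sqrt{C}\sqrt{F(\gamma(s))}$ \emph{before} invoking any exponential decay, only then feeding in $F(\gamma(s))\le F(\gamma(\bar s))e^{-(s-\bar s)/C}$. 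You instead first prove exponential decay of $\phi=\sqrt{f}$ by Gronwall and then control the tail length by Cauchy--Schwarz on unit intervals plus a geometric series, finishing with the same triangle-inequality limit along $s_n$. Both routes are standard; the paper's buys the clean bound $d\le 2\sqrt{C}\sqrt{F(\gamma(s))}$ with constants depending only on $C$, while yours avoids the pointwise chain-rule manipulation and is the pattern more familiar from Floer-theoretic exponential decay proofs. Your constant $A$ carries the factor $e^{\bar s/(2C)}$, i.e.\ you prove decay in $s-\bar s$; the paper's final line quietly has the same feature (its integration of the differential inequality should also start at $\bar s$), so nothing is lost in the application, where $\bar s$ is fixed uniformly for the sequence. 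Two small polish points: after enlarging $\bar s$ your prefactor is $\sqrt{F(\gamma(\bar s'))}$ at the enlarged time and the estimate is proved only for $s\ge\bar s'$, which is harmless for the compactness argument but should be said; and in the degenerate case $f(s_0)=0$ the claim is ``trivial'' only for $s\ge s_0$ --- on $[\bar s,s_0)$ you should remark that the same Gronwall and unit-interval estimates apply (they do, since $f>0$ there and the Cauchy--Schwarz step only needs $f\ge0$ and monotonicity).
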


\begin{proof}
The first step to prove this lemma is an estimate of the distance to $Z$ in terms of the value of $F$ along the flow line $\gamma$. Since $\gamma(s_n)$ converges to $z^+\in Z$ we find for any fixed $s\geq\bar{s}$ an integer $N\in\N$ with $s_N>s$ and 
\bea
d(z^+,\gamma(s_n))<\frac{1}{N}
\eea
for all $n> N$. The following estimate is originally taken from \cite{AF13} and is used here with only minor changes.
We use some $n> N$ and the action-energy estimate from Lemma \ref{Morse-Bott-action-energy} to compute (recalling that $s_n>s$)
\bea
 d(z^+,\gamma(s))&\leq d(z^+,\gamma(s_n)) + d(\gamma(s_n),\gamma(s))\\
 &<\frac{1}{N}+\int_s^{s_n}|\gamma'(t)|\, dt\\
 &=\frac{1}{N}+\int_s^{s_n}|\nabla F(\gamma(t))|\, dt\\
 &=\frac{1}{N}+\int_s^{s_n}\frac{|\nabla F(\gamma(t))|^2}{|\nabla F(\gamma(t))|}\, dt\\
&=\frac{1}{N}+\int_s^{s_n}\frac{|\nabla F(\gamma(t))|^2}{\sqrt{e(\gamma(t))}}\, dt\\
&\leq\frac{1}{N}+\sqrt C \int_s^{s_n}\frac{|\nabla F(\gamma(t))|^2}{\sqrt{F(\gamma(t))}}\, dt\\
&=\frac{1}{N}-\sqrt C \int_s^{s_n}\frac{\left<\nabla F(\gamma(t)),\gamma'(t)\right>}{\sqrt{F(\gamma(t))}}\, dt\\
&=\frac{1}{N}-\sqrt C \int_s^{s_n}\frac{\frac{d}{dt}F(\gamma(t))}{\sqrt{F(\gamma(t))}}\, dt\\
&=\frac{1}{N}-2\sqrt C \int_s^{s_n}\frac{d}{dt}\sqrt{F(\gamma(t))}\, dt\\
&=\frac{1}{N}-2\sqrt C \left(\sqrt{F(\gamma(s_n))}-\sqrt{F(\gamma(s))}\right)\\
\eea
As the left hand side is independent of $N$, we can now take the limit $N\to\infty$. Then $F(\gamma(s_n))$ converges to 0, since we assumed $n>N$ and $\gamma(s_n)$ converges to $z^+\in Z$.
In the limit, we find the estimate
\bea
 d(z^+,\gamma(s))\leq 2\sqrt C \sqrt{F(\gamma(s))}\quad\forall s\geq\bar{s}.
\eea
Now it suffices to calculate
\bea
\frac{d}{ds}F(\gamma(s))&=-\left<\nabla F(\gamma(s)),\, \nabla F(\gamma(s))\right>\\
&=-e(\gamma(s))\\
&\leq -\frac{1}{\sqrt C } F(\gamma(s)).
\eea
This differential inequality shows that
\bea
F(\gamma(s))\leq F(\gamma(\bar{s}))\, e^{-\frac{s}{\sqrt C}}
\eea
and therefore the above computation gives the estimate
\bea
 d(z^+,\gamma(s))&\leq 2\sqrt{CF(\gamma(s))}\leq 2\sqrt{CF(\gamma(\bar{s}))}e^{-\frac{s}{2\sqrt C}}.
\eea
This shows exponential convergence of $\gamma(s)$ to $z^+\in Z$ and completes the proof of the lemma.
\end{proof}

\subsection{Compactness of moduli spaces}
\label{compactness}

In this subsection, we will apply Assumption \ref{Ass}, the Morse-Bott property and the bounds given by Lemma \ref{bound} to prove $W^{1,2}$-compactness of $\M_{[0,R]}(Z)$.

\begin{Thm}
\label{thm:compactness}
If 
\beq
\|h-F\|<\S
\eeq
then for all $R\geq0$ the moduli space $\M_{[0,R]}(Z)$ is compact in $W^{1,2}(\R,M)$.
\end{Thm}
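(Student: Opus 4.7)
The plan is to work in three steps: extract a $C^\infty_{loc}$-convergent subsequence via Assumption \ref{Ass}, identify the limit as an element of $\M_{[0,R]}(Z)$, and then promote $C^\infty_{loc}$-convergence to $W^{1,2}$-convergence using uniform exponential tails.

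Given a sequence $\{(r_n,\gamma_n)\}\subset\M_{[0,R]}(Z)$, compactness of $[0,R]$ yields $r_n\to r_\infty\in[0,R]$, and Assumption \ref{Ass} furnishes a subsequence with $\gamma_n\to\gamma_\infty$ in $C^\infty_{loc}(\R,M)$. Since $G_{r_n,s}\to G_{r_\infty,s}$ smoothly on compact intervals, $\gamma_\infty$ solves the negative gradient flow equation for $G_{r_\infty,\cdot}$ on all of $\R$. The energy bound \eqref{energy} combined with Fatou's lemma gives $E(\gamma_\infty)\leq\|h-F\|$, and the pointwise estimate of Lemma \ref{bound F} passes to the $C^\infty_{loc}$-limit to give $|F(\gamma_\infty(s))|\leq\|h-F\|<\S$ for every $s\in\R$. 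For $|s|\geq (k+1)r_\infty+1$ (or $s\leq -1$) the function $G_{r_\infty,s}$ reduces to $F$, so $\gamma_\infty$ is asymptotically a negative $F$-gradient flow line; finite energy produces a sequence $s_m\to+\infty$ with $|\nabla F(\gamma_\infty(s_m))|\to 0$, and a second application of Assumption \ref{Ass} extracts a subsequential limit $z^+\in\Crit F$. The spectral gap inequality $|F(z^+)|\leq\|h-F\|<\S$ forces $z^+\in Z$, and the combination of monotonicity of $F$ past the transition region, the action-energy estimate (Lemma \ref{Morse-Bott-action-energy}), and Lemma \ref{exp convergence} then yields full exponential convergence $\gamma_\infty(s)\to z^+$. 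The case $s\to-\infty$ is identical, so $\gamma_\infty\in\M_{[0,R]}(Z)$.

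To upgrade to $W^{1,2}$-convergence it suffices to establish uniform tail control: find $T>0$, independent of $n$, such that $\gamma_n(s)\in U_Z$ for all $|s|\geq T$ and all $n$. Given this, Lemma \ref{exp convergence}, whose constants $A,B$ do not depend on the trajectory, supplies uniformly exponentially small tails, which together with $C^\infty_{loc}$-convergence on $[-T,T]$ delivers the $W^{1,2}$-statement. Establishing this uniform entry into $U_Z$ is the main obstacle, and I would dispatch it by contradiction: assume $\gamma_n(s_n)\notin U_Z$ for some $s_n\to+\infty$ (the case $s_n\to-\infty$ is symmetric). The shifts $\tilde\gamma_n(s):=\gamma_n(s+s_n)$ satisfy the flow equation for $G_{r_n,\cdot+s_n}$, which on every compact $s$-interval is eventually identically $F$ since $r_n$ is bounded and $s_n\to\infty$. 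Assumption \ref{Ass} produces a limit $\tilde\gamma$ in $C^\infty_{loc}$ which is a complete negative $F$-gradient line satisfying $|F(\tilde\gamma(s))|\leq\|h-F\|<\S$ and, by Fatou, $E(\tilde\gamma)\leq\|h-F\|$. Applying the asymptotic analysis of the previous paragraph to $\tilde\gamma$ gives $\tilde\gamma(\pm\infty)\in Z$, hence $F(\tilde\gamma(\pm\infty))=0$, forcing $E(\tilde\gamma)=F(\tilde\gamma(-\infty))-F(\tilde\gamma(+\infty))=0$. Thus $\tilde\gamma\equiv z$ for some $z\in Z\subset U_Z$, contradicting $\tilde\gamma(0)=\lim_n\gamma_n(s_n)\in M\setminus U_Z$ (a closed set disjoint from $Z$). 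This breaking-type argument, enabled by $\|h-F\|<\S$, is where the spectral gap hypothesis is essential.
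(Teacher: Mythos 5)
Your proposal is correct, and its first half runs parallel to the paper: extract a $C^\infty_{loc}$-limit via Assumption \ref{Ass}, verify the flow equation and the energy bound for the limit, identify subsequential asymptotic limits as critical points of $F$, and use $\|h-F\|<\S$ together with Lemma \ref{bound}/Lemma \ref{bound F} to force them into $Z$, then invoke Lemma \ref{exp convergence}. (The paper identifies the asymptotic limit by showing the shifted limit trajectory is constant via the energy-quantization argument of \cite{CF09}; your choice of times where $|\nabla F|$ is small is an equally valid, slightly more elementary variant.) Where you genuinely diverge is the uniform tail control needed for $W^{1,2}$-convergence: you obtain the uniform entry time into $U_Z$ by a breaking/shift argument — the rescaled limit is a complete negative $F$-gradient line whose asymptotic $F$-values vanish because of the spectral gap, hence it has zero energy and is a constant in $Z$, contradicting $\tilde\gamma(0)\notin U_Z$ — whereas the paper follows Schwarz's Lemma 2.39: Assumption \ref{Ass} replaces Palais--Smale to give compactness of $\overline{\bigcup_n\gamma_n(\R)}$, which yields a uniform H\"older-type estimate for $\|\nabla G_{r_n,s}\|$ along the trajectories and then a monotonicity argument ($|\nabla F|\geq\epsilon$ forces $F\geq\delta$, while gradient lines only decrease $F$). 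Your route is more self-contained and uses the spectral gap more directly; the paper's buys the $W^{1,2}$ (and $W^{1,p}$) upgrade essentially off the shelf from \cite{Sch93}. Two small points to tighten: in your second paragraph you invoke Lemma \ref{exp convergence} for $\gamma_\infty$ before establishing that $\gamma_\infty$ eventually stays in $U_Z$ — monotonicity of $F$ plus the action--energy estimate alone do not give this, since the estimate only holds inside $U_Z$; either apply your third-paragraph breaking argument to the single element $(r_\infty,\gamma_\infty)\in\M$, or use the paper's argument with a smaller neighbourhood $V_Z$ and the gradient bounds on $U_Z\setminus V_Z$. And in the final step, uniform exponential $C^0$-tails alone are not literally $W^{1,2}$-convergence: you should either note that near $Z$ one also has $|\gamma_n'|=|\nabla F(\gamma_n)|\lesssim d(\gamma_n,Z)$, so the derivatives have a uniform exponentially decaying envelope and dominated convergence applies, or reduce to uniform $C^0$-convergence plus $C^\infty_{loc}$ and cite \cite{Sch93} as the paper does.
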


\begin{proof}[Proof of Theorem \ref{thm:compactness}]
Let $(r_n,\gamma_n)$ be a sequence in $\M_{[0,R]}(Z)$. 
As a first step, we show that (a subsequence of) the sequence $(\gamma_n)$ converges in $C^\infty_{loc}(\R,M)$ and its limit is again in $\M_{[0,R]}(Z)$. This then implies convergence of $(\gamma_n)$ converges in $C^\infty(\R,M)$.

Setting $s_n=0$ in Assumption \ref{Ass}, we see that a subsequence of $\gamma_n$ converges in $C^\infty_{loc}(\R,M)$ to some limit $\gamma$, since $\M_{[0,R]}(Z)\subset\M$.
The compactness of $\left[0,R\right]$ implies that, by possibly passing to another subsequence, $(r_n,\gamma_n)$ converges to $(r,\gamma)$ with $r\in\left[0,R\right]$.

We will show that $(r,\gamma)\in\M_{[0,R]}(Z)$, i.e., 
\begin{itemize}
\item $(r,\gamma)\in\M$ and
\item $\lim_{s\to\pm\infty}\gamma(s)\in Z$.
\end{itemize}
For the first assertion, we need to show that $\gamma'=\nabla G_{r,s}(\gamma)$ and $E(\gamma)<\infty$.
The gradient flow equation is a local condition and therefore follows from $C^\infty_{loc}$-convergence of the sequence $\gamma_n$.
To show that the energy of the limit is finite, observe that the energy values $E(\gamma_n)$ are uniformly bounded by $\|h-F\|$. Then also the energy of the limit cannot exceed this bound and is therefore finite. 

The second assertion requires more work. We consider only the limit as $s\to+\infty$ as the case $s\to-\infty$ is analogous. The first step is to find a candidate for the limit of $\gamma(s)$ as $s\to\infty$ and then show that this point is indeed the limit and lies in $Z$. For this, we choose an arbitrary sequence $(s_n)\to\infty$ such that $s_n>(k+1)R+1$ for all $n\in\N$.
We define the new sequence $\bar\gamma_n$ by 
$$
\bar\gamma_n(\cdot)=\gamma(\cdot+s_n).
$$
For this sequence (and the constant sequence $r_n=r$), we apply again Assumption \ref{Ass} to find a subsequence of $\bar\gamma_n$ which converges to some gradient flow line $\gamma^+$. 
Now we can apply an argument as in Lemma 2.1 in \cite{CF09} to show that $\gamma^+$ is constant. For the sake of completeness, we include the argument here.

Assume $\gamma^+$ to be non-constant and observe that if $s$ is positive, $\gamma^+$ is a gradient flow line of $F$ by our choice of $s_n$. Since $\gamma^+$ is a gradient flow line, there is some $s_0>0$, such that
$$
F(\gamma^+(0))-F(\gamma^+(s_0))=\epsilon
$$
for some $\epsilon >0$.
By $C^\infty_{loc}$-convergence of $\bar\gamma_n$ to $\gamma^+$, this implies for some $n_0\in\N$ that 
$$
F(\bar\gamma_n(0))-F(\bar\gamma_n(s_0))\geq\epsilon/2
$$
for all $n\geq n_0$.
We now use the definition of $\bar\gamma_n$ to find
$$
F(\gamma(s_n))-F(\gamma(s_0+s_n))\geq\epsilon/2.
$$
We now define a subsequence of $s_n$ starting at the above $n_0$, as
$$
n_k=\min\{n\, |\, n_k-n_{k-1}\geq s_0\}.
$$
If we choose $k_0>2E(\gamma)/\epsilon$, we can now compute for the energy $E(\gamma)$ of the gradient flow line $\gamma$:
\bea
E(\gamma)&=\int_{-\infty}^\infty |\partial_s\gamma|^2\ ds\\
&\geq\sum_{k=0}^{k_0-1}\int_{s_{n_k}}^{s_{n_k}+s_0} |\partial_s\gamma|^2\ ds\\
&=-\sum_{k=0}^{k_0-1}\int_{s_k}^{s_{n_k}+s_0} \frac{d}{d s}F(\gamma(s))\ ds\\
&\geq\sum_{k=0}^{k_0-1}F(\gamma(s_{n_k}))-F(\gamma(s_{n_k}+s_0))\\
&\geq\frac{k_0\epsilon}{2}\\
&>E(\gamma).
\eea
We conclude that $\gamma^+$ is indeed constant and thus a critical point  of $F$.
Now the assumption $\|h-F\|<\S$ together with Lemma \ref{bound} implies that $\gamma^+=:z^+=\lim_{n\to\infty}\gamma(s_n)\in Z$.

We now show that $\gamma$ converges not only along the sequence $(s_n)$ but actually $\displaystyle\lim_{n\to\infty}\gamma(s)=z^+$. This follows immediately from Lemma \ref{exp convergence} once we find $\bar s\in\R$ such that $\gamma_n([\bar{s},\infty))\subset U_Z$, where $U_Z$ is as in Lemma \ref{Morse-Bott-action-energy}. We fix a neigborhood $V_Z\subset U_Z$ of $Z$ together with constants $d,D>0$ such that $0<d\leq\|\nabla F\| \leq D$ holds on $U_Z\setminus V_Z$, see the proof of Lemma \ref{Morse-Bott-action-energy}. This is possible since $F$ is Morse-Bott along $Z$. If no $\bar{s}$ as claimed exists then there is a sequence $t_n\to\infty$ such that $\gamma(t_n)\notin U_Z$. But as $\gamma(s_n)\to z^+\in Z$, we may assume that $\gamma(s_n)\in V_Z$. We may assume that $s_n<t_n<s_{n+1}$ by passing to subsequences if needed. 

The inequality $0<d\leq\|\nabla F\| \leq D$ on $U_Z\setminus V_Z$ implies that for every interval $(s_n,t_n)$ and $(t_n,s_{n+1})$ the energy of $\gamma$ increases by at least some fixed positive amount. This contradicts the fact that $\gamma$ has finite energy. Thus there exists some $\bar s$ such that $\gamma(s)\in U_Z$ for all $s\geq \bar s$ and by Lemma \ref{exp convergence}, $\gamma$ converges exponentially fast to $z^+\in Z$ and hence $\gamma\in\M_{[0,R]}(Z)$.

Now it remains to show that $\gamma_n\stackrel{W^{1,2}}{\xrightarrow{\hspace*{3ex}}}\gamma$. Following \cite[Lemma 2.39]{Sch93}, it suffices to show that $\gamma_n\to\gamma$ uniformly in $C^0$-topology. The proof in \cite{Sch93} relies on the fact that the set
\beq
K:=\overline{\bigcup_{n\in\N}\gamma_n(\R)}\subset M
\eeq
is compact. This is inferred from a Palais-Smale type assumption which we do not make here. Instead we can use Assumption \ref{Ass} to arrive at the same conclusion. Indeed, any sequence $\big(\gamma_k(t_k)\big)$ has a convergent subsequence since $\big(\gamma_k(\cdot+t_k)\big)$ converges in $C^\infty_{loc}$ according to Assumption \ref{Ass}. Then the proof in \cite{Sch93} applies verbatim. We recollect the salient steps.

The compactness of $K$ implies that $\nabla G_{r,s}|_K$ is uniformly Lipschitz continuous independent of $r$ and $s$. Thus there exists a constant $\tilde c$ such that
\bea
\big|\, \|\nabla G_{r_n,s}(\gamma_n(t))\| - \|\nabla G_{r_n,s}(\gamma_n(s))\|\, \big|&\leq \tilde c\cdot d(\gamma_n(t),\gamma_n(s))\\
&\leq \tilde c\cdot \left|\int_s^t|\dot\gamma_n(\tau)|d\tau\right|\\
&\leq \tilde c\sqrt{|t-s|} \sqrt{\int_s^t|\dot\gamma_n(\tau)|^2d\tau}\\
&\leq \tilde c\sqrt{|t-s|} \sqrt{\int_s^t|\nabla G_{r_n,\tau}(\gamma_n(\tau))|^2d\tau}\\
\eea
We focus on the last factor. 
\bean
\sqrt{\int_s^t|\nabla G_{r_n,\tau}(\gamma_n(\tau))|^2d\tau}&= \sqrt{\int_s^t\frac{d}{d\tau} G_{r_n,\tau}(\gamma_n(\tau)) - \frac{\p G_{r_n,\tau}}{\p \tau}(\gamma_n(\tau))d\tau}\\
&=\sqrt{\big[G_{r_n,t}(\gamma_n(t))-G_{r_n,s}(\gamma_n(s))\big]-\int_s^t\frac{\p G_{r_n,\tau}}{\p \tau}(\gamma_n(\tau))d\tau}\\
&= \sqrt{\big[G_{r_n,t}(\gamma_n(t))-G_{r_n,s}(\gamma_n(s))\big]-\int_s^t\beta_{r_n}'(t)\big[h-F\big](\gamma_n(\tau))d\tau}\\
\eea
Since $K$ is compact, $\beta_r'$ is bounded (see Convention \eqref{def:beta}) and non-zero only on a compact set, and $(r_n)$ is bounded the above term is bounded as well. Therefore there is another constant $c$ such that
\beq
\big|\, \|\nabla G_{r_n,s}(\gamma_n(t))\| - \|\nabla G_{r_n,s}(\gamma_n(s))\|\, \big|\leq c\sqrt{|t-s|} \quad\forall s,t\in\R,\forall n\in\N
\eeq
Assume now that $(\gamma_n)$ does not uniformly $C^0$-converge to $Z$ for $s\to\infty$. Then there exists a sequence $s_k\to\infty$ such that $\gamma_{n_k}(s_k)\not\in U_Z$ (after possibly shrinking $U_Z$.) Since $s_k\to\infty$ we have $G_{r_{n_k},s_k}(\gamma_{n_k}(s_k))=F(\gamma_{n_k}(s_k))$ for $k$ sufficiently large. In particular, we may assume that for all $k\in\N$ we have $|\nabla F(\gamma_{n_k}(s_k))|\geq\epsilon$ for some fixed $\epsilon>0$.  As in the proof of \cite[Lemma 2.39]{Sch93}, this implies that there exists $\delta>0$ such that $F(\gamma_{n_k}(s_k))\geq\delta>0=F(Z)$. On the other hand, the exponential convergence of $\gamma$ to $Z$ implies that there is some $\tilde s$ such that $F(\gamma(s))\leq\frac{1}{2}\delta$ for all $s\geq\tilde s$. Then the $C^\infty_{loc}$-convergence of $\gamma_n$ to $\gamma$ yields a contradiction to $s_k\to\infty$, as the gradient flow lines can only decrease the values of $F$. Thus, there exists $\bar s$ such that 
\beq
\gamma_n([\bar s,+\infty))\subset U_Z \quad\forall n\in\N\;.
\eeq
The same argument works, of course, for $s\to-\infty$.

Now we can show the uniform $C^0$-convergence of $\gamma_n$ to $\gamma$. From Lemma \ref{exp convergence} and the $C^\infty_{loc}$-convergence of $\gamma_n$ to $\gamma$ we conclude that $\gamma_n$ converge uniformly exponentially fast to $Z$ at $s\to \pm\infty$ on $(-\infty, -\bar s]\cup[\bar s,+\infty)$. In particular, $(\gamma_n)$ converges uniformly in $C^0$ on $(-\infty, -\bar s]\cup[\bar s,+\infty)$. On the remaining compact interval $[-\bar s,\bar s]$ the $C^\infty_{loc}$-convergence, of course, implies uniform $C^0$-convergence. Together we obtain uniform $C^0$-convergence on all of $\R$.

This finishes the proof, as the desired $W^{1,2}$-convergence follows from $C^\infty_{loc}$-convergence and uniform $C^0$-convergence as in \cite[Lemma 2.39]{Sch93}.

%

\end{proof}

\begin{Rmk}
In Morse theory it is convenient to work with $W^{1,2}$-spaces. However, in Floer theory $W^{1,p}$-spaces, $p>2$, are necessary. We point out that the above argument works unchanged for $W^{1,p}$ due to the uniform exponential convergence, see \cite[Proposition 4.3.11]{Sch95}.
\end{Rmk}

\section{Cup-length estimates}
\label{crit points}

In this section, we will prove Theorem \ref{thm:main_intro}. From now on, we fix the function $F$ having a Morse-Bott critical submanifold $Z$. For convenience, we assume again that $F|_Z=0$. Assume also that $F$ has positive spectral gap $\S>0$. Moreover, let $h$ be a smooth function with
\beq
\|h-F\|<\S\;.
\eeq 
We require that Assumption \ref{Ass} holds. Recall that we need to show that the function $h$ has at least $\mathrm{cuplength}(Z)+1$ critical points with critical values in the interval $[-\|h-F\|,\,\|h-F\|]$. We further recall that the cuplength of $Z$ is defined as
\beq
\mathrm{cuplength}(Z):=\max\{k\in\N\mid \exists a_1,\ldots,a_k\in\H^{\geq1}(Z)\text{ such that } a_1\cup\ldots\cup a_k\neq0\}.
\eeq
Here, $\H^{\geq1}(Z)$ denotes the cohomology in degree at least 1. In the following we will use Morse (co-)homology for closed finite dimensional manifolds. A detailed treatment can be found in \cite{Sch93}. We use here the symbols $\CM_*(f)$ resp. $\CM^*(f)$ for the Morse chain resp.~cochain complex of the Morse function $f$.

\begin{proof}
We follow the line of proof from \cite{AM10}. We set $k:=\mathrm{cuplength}(Z)$ and choose Morse functions $f_1,\ldots,f_k,f_\ast:Z\to\R$. We extend $f_i$ to Morse functions $\bar{f}_i:M\to\R$ such that on a tubular neighborhood $U$ of $Z$ we have ${\bar{f}_i}|_{U}=f_i+q$ where $q:U\to\R$ is a positive definite quadratic form in normal direction to $Z$, thus, $\Crit f_i\subset\Crit\bar{f}_i$. Moreover we assume that $\bar{f}_i$ is proper and bounded from below. This implies that the negative gradient flow of $\bar{f}_i$ is defined in forward time. Finally, we choose Riemannian metrics $g_1,\ldots, g_k,g_\ast$ on $M$.

For a fixed $R\geq0$, the set 
\beq
\M_R(Z)=\big\{\gamma \,|\,(R,\gamma)\in\M_{[0,R]}(Z)\big\}
\eeq
is the zero-set of a Fredholm section in a separable Banach space bundle. Lemma \ref{thm:compactness} of compactness of $\M_{[0,R]}(Z)$ applies to $\M_R(Z)$ with the same proof. 

We recall the following general fact. Let $\F$ be a Fredholm section of a separable Banach space bundle having a compact zero-set $\{\F=0\}$. Let $s$ be a section with compact linearization, in particular, $\F+s$ is Fredholm again. If $s$ is sufficiently small then the set $\{\F+s=0\}$ is compact, too. Moreover, for a generic such section $s$ the Fredholm operator $\F+s$ is transverse to the zero-section, see for instance \cite[Theorems 5.5 \& 5.13]{HWZ14}. We call $s$ an abstract perturbation. 

Thus, we can choose a small abstract perturbation of the Fredholm section so that the zero-set $\widetilde\M_R(Z)$ of the perturbation is a compact, smooth manifold of finite dimension. Moreover, by the Morse-Bott assumption on $F$, the Fredholm section is already transverse for $R=0$ and thus, by compactness, also for small $R\geq0$. Therefore, we may assume that $\widetilde\M_R(Z)=\M_R(Z)$ for $R$ sufficiently close to $0$. Analogously, we can perturb the moduli space $\M_{[0,R]}(Z)$ to obtain a smooth compact manifold $\widetilde\M_{[0,R]}(Z)$. Again, we may assume that this perturbation is chosen such that the fibers of $\widetilde\M_{[0,R]}(Z)$ over $0$ and $R$ with respect to the natural projection to $[0,R]$ agree with $\M_0(Z)$ and $\widetilde\M_R(Z)$, respectively. 

The space $\widetilde\M_R(Z)$ carries a natural evaluation map $\widetilde\ev_R\colon\widetilde\M_R(Z)\to M^k$ defined by
\beq
\label{eval}
\widetilde\ev_R(\gamma)=\left( \gamma(R), \gamma(2R),\ldots, \gamma(kR)\right).
\eeq
We point out that $\widetilde\ev_{R=0}$ is the diagonal embedding of $Z$ into $M^k$. Indeed, for critical points $x_i\in\Crit f_i\subset\Crit \bar{f}_i$, and $x_\ast^\pm\in\Crit f_\ast$, consider the moduli space
\bean
\widetilde\M(R,x_1,\ldots,x_k,x_\ast^-,x_\ast^+):=\left\{
\gamma\in\widetilde\M_R(Z)\left|
\begin{array}{c}
\gamma(-\infty)\in W^u(x_\ast^-,f_\ast),\gamma(\infty)\in W^s(x_\ast^+,f_\ast),\\
\widetilde{\ev}_R(\gamma)\in W^s(x_1,\bar{f}_1)\times\ldots\times W^s(x_k,\bar{f}_k)
\end{array}\right. \right\}
\eea 
where $W^u$ and $W^s$ are unstable and stable manifolds, respectively. Above evaluation maps are restrictions of natural evaluation map on the Banach manifold containing $\widetilde\M_R(Z)$. On this Banach manifold the evaluation maps are submersions. Thus we may assume that our perturbations are chosen such that all $\widetilde\M(R,x_1,\ldots,x_k,x_\ast^-,x_\ast^+)$ are smooth manifolds.

We recall that for $R=0$ the space $\M_{R=0}(Z)$ is the space of constant maps to $Z$. Thus the spaces $\widetilde\M(R=0,x_1,\ldots,x_k,x_\ast^-,x_\ast^+)$ can be identified with $W^s(x_1,f_1)\cap\ldots\cap W^s(x_k,f_k)\cap W^s(x_\ast^+,f_\ast)\cap W^u(x_\ast^-,f_\ast)$ since $\bar{f}_i$ is a quadratic extension of $f_i$ near $Z$.

We now define cohomology operations on the Morse co-chain groups
\bea
\theta_R\colon\CM^\ast(f_1)\otimes\ldots\otimes\CM^\ast(f_k)\otimes\CM_\ast(f_\ast)&\to\CM_\ast(f_\ast)\\
x_1\otimes\ldots\otimes x_k\otimes x_\ast^-&\mapsto\sum_{x_\ast^+\in\Crit(f_\ast)}\#_2\widetilde\M(R,x_1,\ldots,x_k,x_\ast^-,x_\ast^+)\cdot x_\ast^+.
\eea
Here we use the convention that $\#_2\widetilde\M(R,x_1,\ldots,x_k,x_\ast^-,x_\ast^+)$ is the parity of this set if it is $0$-dimensional and $0$ otherwise.

For $R=0$ the argument in \cite{AM10}, see also \cite{Sch93}, shows that this is a Morse-theoretic realization of the usual cup-product in cohomology.  Namely, under the identifications of $\HM^*(f_i)$ with $\H^*(Z)$ and $\HM_*(f_\ast)$ with $\H_*(Z)$, in cohomology the map $\theta_0$ agrees with 
\bea
\Theta\colon \H^*(Z)\otimes\ldots\otimes \H^*(Z)\otimes \H_*(Z)&\to \H_*(Z)\\
a_1\otimes\ldots\otimes a_k\otimes b&\mapsto (a_1\cup\ldots \cup a_k)\cap b.
\eea
We point out that the map $\theta_R$ is chain homotopy equivalent to $\theta_0$. For this we recall that $\widetilde\M_{[0,R]}(Z)$ is a compact manifold with boundary $\M_0(Z)\cup\widetilde\M_R(Z)$. Moreover, the space $\widetilde\M_{[0,R]}(Z)$ carries an evaluation map extending $\widetilde\ev_0$ and $\widetilde\ev_R$. This gives rise to a chain homotopy operator between $\theta_0$ and $\theta_R$. 

We now use the cohomology operations to prove the theorem. We assume that $h$ has only finitely many critical points since otherwise there is nothing to prove. Now, we require additionally that the Morse functions $f_i:Z\to\R$ (and their extensions $\bar{f}_i$) satisfy
\beq\label{clever_choice_of_f_i}
W^s(x_i,\bar{f}_i)\cap\Crit (h)=\emptyset
\eeq 
for all $x_i\in\Crit f_i\subset\Crit \bar{f}_i$ with non-zero Morse index. We point out that this a generic (even open and dense) condition for $f_i$ since $\Crit(h)$ is finite. Since $k=\mathrm{cuplength}(Z)$ we find cohomology classes $a_1,\ldots,a_k\in\H^{\geq1}(Z)$ with $a_1\cup\ldots\cup a_k\neq0$. In particular, the map $\Theta$ does not vanish. Since, in cohomology $\theta_0$ agrees with $\Theta$ and $\theta_R$ is chain homotopic to $\theta_0$ we conclude that the map $\theta_R$ does not vanish for all $R\geq0$. This implies that for each $n\in\N$ there are critical point $x_i$ (a priori depending on $n$) of $f_i$ and $x_\ast^\pm$ of $f_\ast$ of index $\geq 1$, such that
\beq
\widetilde\M(n,x_1,\ldots,x_k,x_\ast^-,x_\ast^+)\neq\emptyset.
\eeq
Since there are only finitely many critical points, we may assume that $x_i$ are independent of $n$.
If the unperturbed moduli space $\M(n,x_1,\ldots,x_k,x_\ast^-,x_\ast^+)$ were empty, the same were true for small perturbations. Hence, we can choose elements 
\beq
\gamma_n\in\M(n,x_1,\ldots,x_k,x_\ast^-,x_\ast^+)
\eeq
and consider the sequences
\beq
\gamma_{n,j}(\cdot):=\gamma_n(\cdot+nj)
\eeq
for $j=0,\ldots, k+1$. By Assumption \ref{Ass}, these sequences converge in $C^\infty_{loc}(\R,M)$( up to taking a subsequence) to curves $\gamma^{(j)}$ which solve the following equations:
\beq
\gamma^{(0)} \quad \text{solves}\quad\frac{d}{ds}\gamma^{(0)}(s)+\nabla^g\left(\beta_\infty^+(s)h(\gamma^{(0)}(s))+(1-\beta_\infty^+(s))F(\gamma^{(0)}(s))\right)=0,
\eeq

\beq
\gamma^{(k+1)} \quad \text{solves}\quad\frac{d}{ds}\gamma^{(k+1)}(s)+\nabla^g\left(\beta_\infty^-(s)h(\gamma^{(k+1)}(s))+(1-\beta_\infty^-(s))F(\gamma^{(0)}(s))\right)=0.
\eeq
For $j=1,\ldots,k$,
\beq
\gamma^{(j)}\quad \text{solves}\quad\frac{d}{ds}\gamma^{(j)}(s)+\nabla^gh(\gamma^{(j)}(s))=0
\eeq
and the endpoints at $\pm\infty$ of those $\gamma{(j)}$ and also $\gamma^{(0)}(\infty)$ and $\gamma^{(k+1)}(-\infty)$ are critical points of $h$. Denote these critical points by $y_j^\pm:=\gamma^{(j)}(\pm\infty)$.
Furthermore, it follows from the definition of the sequences $\gamma_{n,j}$, that the values of $h$ at these critical points  are ordered as follows:
\beq
\label{order crit values}
h(y_0^+)\geq h(y_1^-)\geq h(y_1^+)\geq h(y_2^-)\geq h(y_2^+)\geq\ldots\geq h(y_k^+)\geq h(y_{k+1}^-).
\eeq
To prove now that we have at least $k+1$ distinct critical points of $h$, we will show that the inequalities between positive and negative ends of the trajectories $\gamma^{(j)}$ for $j=1,\ldots k$ are strict, i.e., that the trajectories $\gamma^{(j)}$ are non-constant.

Assume this is not the case. By definition of the moduli space $\M(n,x_1,\ldots,x_k,x_\ast^-,x_\ast^+)$, we have
\beq
\ev_n(\gamma_n)\in W^s(x_1,f_1)\times\ldots\times W^s(x_k,f_k).
\eeq
In particular, this implies $\gamma_n(nj)\in W^s(x_j,f_j)$ for all $n\in\N$. By definition of the evaluation map and the sequence $\gamma_{n,j}$, this shows that 
\beq
\gamma^{(j)}(0)=\lim_{n\to\infty} \gamma_n(nj)\in\overline{W^s(x_j,f_j)}.
\eeq
Since $\bar{f}_j$ is proper and bounded from below the closure of a stable manifold $W^s(x_j,f_j)$ is a union of stable manifolds of smaller dimension. As we assume that $\gamma^{(j)}$ is constant, thus a critical point of $h$, this contradicts assumption \eqref{clever_choice_of_f_i}. Thus none of the $\gamma^{(j)}$ is constant and all $x_i$ have index $\geq 1$ and the corresponding inequalities in \eqref{order crit values} are strict. This shows that indeed the point $\widetilde y_j:=y_j^-$ for $j=1,\ldots,k$ and $\widetilde y_{k+1}:=y_k^+$ are distinct critical points of $h$.

To see why the critical values of $\widetilde y_{1},\ldots, \widetilde y_{k+1}$ are in the interval $[-\|h-F\|,\,\|h-F\|]$, it suffices by \eqref{order crit values} to look at $h(y_0^+)$ and $h(y_{k+1}^-)$. We recall from Lemma \ref{bound F} that $|F(\gamma_n(s))|$ is bounded by $\|h-F\|$  for all $s\in\R$. This carries over to $\widetilde y_{1},\ldots, \widetilde y_{k+1}$ in the various limits. 

This finishes the proof of Theorem \ref{thm:main_intro}
\end{proof}

\section{Applications to Floer theory}
\label{sec:Floer}

\subsection{Fixed points of Hamiltonian diffeomorphisms}
\label{sec:Ham fixed points}

In this section, we apply Theorem \ref{thm:main_intro} to the action functional of Hamiltonian dynamics. The resulting bound on the number of critical points then yields a bound on the number of fixed points of Hamiltonian diffeomorphisms in terms of the cohomology of the underlying symplectic manifold. This reproduces previous proofs of the Arnold conjecture, see, e.g., \cite{Sch98}.

The proofs in the Floer case only differ in some details from the Morse theoretic proof above and we will point out the necessary changes in the proofs. The main differences are that different critical points can represent the same periodic orbit and that compactness can not only fail due to breaking trajectories as in the Morse case but also due to bubbling. Both issues can be excluded using the bounds on the Hamiltonian given by the condition on $h$ in Theorem \ref{thm:main_intro}.

We first describe the setting and identify the functionals which will take the role of the functions $F$ and $h$ from the Morse case.

Let $(W,\omega)$ be a closed, rational symplectic manifold, i.e. $\omega|_{\pi_2}=\lambda\Z$ for some positive $\lambda$. If $\omega|_{\pi_2}=0$, we set $\lambda=\infty$.
For this setting, we can now apply the Morse theoretic argument above to the Hamiltonian Floer action functional.
To do this, let $M=\widetilde{\Lambda W}$ be the covering space of  the space of contractible loops in $W$ and 
\beq
F=\A(\bar x)=-\int_{D^2}u^\ast\omega,
\eeq
where $\bar x$ is a loop $x$ together with an equivalence class $[u]$ of cappings. Two cappings are equivalent, if the two discs have the same symplectic area.

For this functional, the critical points are the constant loops with all different equivalence classes of cappings. These are Morse-Bott components since the symplectic form is non-degenerate. We define again 
\beq
Z=F^{-1}(0)=W
\eeq
to be the set of constant loops with constant cappings and identify this with the symplectic manifold $W$.
Furthermore, for this functional, we find $\S=\lambda$.

To define the function $h$, we define for a Hamiltonian $H\colon S^1\times W\to\R$ the action
\beq
\A_H(\bar x)=-\int_{D^2}u^\ast\omega+\int_{S^1}H(x)\,dt.
\eeq
Critical points of this functional are one-periodic orbits of the Hamiltonian flow of $H$, which can be identified with fixed points of the time one map $\varphi_H$. For those fixed points, we find the following analog of Theorem \ref{thm:main_intro} in the current setting:

\begin{Thm}[Existence of fixed points]
\label{thm:fixed points}
If $H$ has Hofer norm $\|H\|_H<\lambda$, then $H$ has at least $\mathrm{cuplength}(W) +1$ one-periodic orbits.
\end{Thm}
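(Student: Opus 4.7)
The plan is to apply Theorem \ref{thm:main_intro} to the triple $(M, F, h) = (\widetilde{\Lambda W}, \A, \A_H)$ set up in the preceding discussion, and then to project the resulting critical points of $\A_H$ from the cover down to genuine one-periodic orbits of $H$. The Morse-Bott condition along $Z = W$ (constant loops with trivial capping) follows from non-degeneracy of $\om$, the spectral gap equals $\lambda$ by rationality, and the hypothesis $\|H\|_\H < \lambda$ gives $\|h - F\| \leq \|H\|_\H < \S$ as required.

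The first task is to verify Assumption \ref{Ass} in this infinite-dimensional setting. Choosing an $S^1$-family of $\om$-compatible almost complex structures $J_t$ induces a standard $L^2$-metric on $\widetilde{\Lambda W}$, and the gradient equation for $G_{r,s} = \beta_r(s)\A_H + (1-\beta_r(s))\A$ becomes the $s$-dependent Floer equation
\beq
\partial_s u + J_t(u)\bigl(\partial_t u - \beta_r(s)X_H(t,u)\bigr) = 0
\eeq
for $u\colon\R \times S^1 \to W$. Assumption \ref{Ass} then amounts to $C^\infty_{loc}$-compactness of such solutions after arbitrary $s$-translations. Since $W$ is closed, there is an automatic $C^0$-bound; by standard Floer-Gromov compactness and elliptic bootstrapping it suffices to rule out bubbling of $J$-holomorphic spheres, and the uniform energy estimate \eqref{energy} gives $E(u) \leq \|h - F\| \leq \|H\|_\H < \lambda$, strictly less than the minimum spherical energy in the rational manifold $W$.

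With Assumption \ref{Ass} in place, the abstract cup-length argument of Section \ref{crit points} applies essentially verbatim, the negative gradient flow being replaced by the $s$-dependent Floer equation. The operation $\theta_R$ is built out of Morse cohomology of auxiliary Morse functions on the closed \emph{finite-dimensional} manifold $Z = W$, so no Floer homology enters in the construction of the cup product; $\theta_R$ is chain homotopic to $\theta_0$ and therefore non-vanishing on the chosen classes $a_1 \otimes \cdots \otimes a_k$ with $k = \mathrm{cuplength}(W)$. Letting $R = n \to \infty$ produces a broken Floer configuration whose middle pieces $u^{(1)}, \ldots, u^{(k)}$ are honest negative gradient trajectories of $\A_H$, and the genericity condition \eqref{clever_choice_of_f_i} forces each of these to be non-constant, yielding $k+1$ distinct critical points $\widetilde y_1, \ldots, \widetilde y_{k+1}$ of $\A_H$ in $\widetilde{\Lambda W}$ with strictly decreasing action values all contained in $[-\|H\|_\H, \|H\|_\H]$.

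The main, and only Floer-specific, obstacle is then the descent from critical points on the cover to one-periodic orbits on $W$. Two critical points $[x,u], [x,u']$ projecting to the same orbit $x$ have action difference $\int_{S^2}(u' \# \bar u)^*\om \in \lambda\Z$. On the other hand, the total action spread of the $\widetilde y_j$ is bounded by the total energy of the approximating broken configuration, which by \eqref{energy} is at most $\|h - F\| \leq \|H\|_\H < \lambda$; hence any two $\widetilde y_j, \widetilde y_{j'}$ have action difference of absolute value strictly less than $\lambda$, ruling out a non-trivial recapping. Thus the $k+1$ critical points descend to $\mathrm{cuplength}(W) + 1$ distinct one-periodic orbits of $H$, proving the theorem. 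Observe that the strict inequality $\|H\|_\H < \lambda$ is used crucially twice, to exclude sphere bubbling in the compactness step and to separate recappings in this final descent.
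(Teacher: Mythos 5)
Your proposal is correct and follows essentially the same route as the paper: verify Assumption \ref{Ass} by the energy bound $E\leq\|h-F\|\leq\|H\|_{\H}<\lambda$ excluding sphere bubbling, run the cup-length argument with evaluation maps landing in $W$ and Morse functions on the finite-dimensional $Z=W$, and use the same energy bound to rule out that two of the resulting critical points are recappings of one orbit. Your descent step is just a more explicit version of the paper's remark that a (broken) Floer trajectory of energy less than $\lambda$ cannot connect a periodic orbit to itself with a different capping.
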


If $\lambda=\infty$, this lower bound on the number of one-periodic orbits holds for all Hamiltonians, as $W$ is compact and therefore all Hamiltonians have finite Hofer norm.

\begin{proof}
Using the Hamiltonian Floer equation instead of the gradient equation, we still define the moduli spaces $\M$ and $\M_{[0,R]}(Z)$ as in the Morse case. For $r=0$, the function $G_{r,s}$ is simply the unperturbed action $F$ and all Floer trajectories are constant.
The estimates in Lemma \ref{bound} also work in this case and give bounds in terms of the Hofer norm of the Hamiltonian $H$. Assumption \ref{Ass} also holds for all Floer theoretic situations as this provides compactness in $C^\infty_{loc}$ convergence of sequences of trajectories. The only possibility of this compactness to fail is bubbling. Now the definition of $\lambda$ shows that every bubble must have at least energy $\lambda$ and therefore, there is not enough energy for bubbling to occur by \eqref{energy}. This completes the proof of Assumption \ref{Ass} in this setting.

For the evaluation maps and the Morse functions $f_i$, however, we need to change the definitions.
Namely, for the definition of the evaluation maps, note that the elements in $\M_{[0,R]}(Z)$ can now be considered not only as paths in the loop space, but also as cylinders in the symplectic manifold $W$, i.e., as maps $\gamma\colon S^1\times\R\to W$, where $S^1=\R/\Z$.
Then we define the evaluation map by
\beq
\ev_r(\gamma)=\left( \gamma(0,r), \gamma(0,2r),\ldots, \gamma(0,kr)\right),
\eeq
as a map $\ev_r\colon\M_{[0,R]}(Z)\to W^k$ for $r\leq R$.

With this definition, it suffices now to define the Morse functions $f_i$ to be functions on $W$ and we can define the moduli spaces $\M(r,x_1,\ldots,x_k,x_\ast^-,x_\ast^+)$ as before. In particular, we point out that all evaluation maps already take values only in $W^k$ and therefore define the cup product in the cohomology of $W$.

\begin{Rmk}
We remark that we only use Morse cohomology to realize the cup product and do not rely on a product structure on Floer homology. 
\end{Rmk}

To show why the moduli space $\M_{[0,R]}(Z)$ is compact, we need to exclude breaking of Floer trajectories at critical points of $\A$ outside $Z$ and bubbling. Breaking is excluded by the same argument as in the Morse case; bubbling has been excluded above.
This completes the proof of compactness for $\M_{[0,R]}(Z)$ for the action functional of Hamiltonian dynamics.

From here on, the proof of Theorem \ref{thm:main_intro} goes through for the Floer case. As critical points of the functions $f_i$ represent cohomology classes of $W$, the proof gives the desired bound of at least $\mathrm{cuplength}(W) +1$ critical points of the actions functional with action value in the interval $(-\lambda,\,\lambda)$, since recapping changes the action by at least $\lambda$.

To see why these critical points actually are different periodic orbits of $H$, i.e., that no periodic orbit is found twice with different cappings, we use the energy bound discussed above. Again the bound on $\|H\|_H$ shows that a (broken) Floer trajectory cannot connect a periodic orbit to itself with a different capping.

This shows that we indeed have found $\mathrm{cuplength}(W) +1$ different one-periodic orbits.
\end{proof}

\subsection{Hamiltonian chords of a Lagrangian}

As for periodic orbits, we can also apply Theorem \ref{thm:main_intro} to find Hamiltonian chords of a Lagrangian submanifold to produce estimates similar to those in  \cite{Che98, Flo89, Hof88, Liu05}.

For this setting, let $(W,\omega)$ be a symplectic manifold and $L\subset W$ a Lagrangian.
Assume that $\omega|_{\pi_2(W,L)}$ is rational with rationality constant $\lambda\in (0,\infty]$.

Then we define $M$ as the set $P_0(W,L)$ of paths $x\colon [0,1]\to W$ with $x(0)$ and $x(1)$ in $L$, which are contractible to a path in $L$ and consider the action functional 
\beq
F=\A(\bar x)=-\int_{D^2}u^\ast\omega.
\eeq
Here, $\bar x$ is a path $x$ together with an equivalence class $[u]$ of discs bounded by the path $x$ and $L$. Analogously to above, two cappings are equivalent if the two discs have the same symplectic area.
For this functional, the critical points are the constant paths on $L$ with all different equivalence classes of cappings.
We define again
\beq
Z=F^{-1}(0)=L
\eeq
to be the set of critical points with constant cappings and identify this with the Lagrangian $L$. Again, this is Morse-Bott as the symplectic form is non-degenerate.
Furthermore, for this functional, we find $\S=\lambda$.

The functional $h$ is defined using a Hamiltonian $H\colon [0,1]\times W\to\R$.
Namely, we take
\beq
\A_H(\bar x)=-\int_{D^2}u^\ast\omega+\int_{[0,1]}H\,dt.
\eeq
Critical points of this functional are Hamiltonian chords of $L$ together with an equivalence class of cappings.

The above setting is again analogous to the situation of Theorem \ref{thm:main_intro} and we find the following

\begin{Thm}[Existence of Hamiltonian chords]
\label{thm:chords}
If $\|H\|<\lambda$, where $\|\cdot\|$ is the Hofer norm of $H$, then the Lagrangian $L$ has at least $\mathrm{cuplength}(L) +1$ Hamiltonian chords.
\end{Thm}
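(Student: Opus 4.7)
The plan is to transcribe the argument of Theorem \ref{thm:fixed points} into the Lagrangian setting, replacing loops by paths with endpoints on $L$ and $W$ by $L$ throughout. Take $F := \A$ on the cover $M := \widetilde{P_0(W,L)}$ where pairs $(x,u)$ and $(x,v)$ are identified when $u$ and $v$ enclose the same symplectic area, take $h := \A_H$, and let $Z \subset M$ be the set of constant paths on $L$ equipped with the trivial (zero-area) capping, identified with $L$ itself. The Lagrangian condition forces $F$ to be Morse-Bott along $Z$, and the rationality hypothesis on $\om|_{\pi_2(W,L)}$ gives spectral gap $\S = \lambda$; the theorem's assumption $\|H\|_\H < \lambda$ then reduces to the hypothesis $\|h-F\| < \S$ of Theorem \ref{thm:main_intro}.

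To verify Assumption \ref{Ass} I use the continuation Floer strip equation associated to $G_{r,s}$ with boundary conditions $u(s,0), u(s,1) \in L$. Standard elliptic estimates yield $C^\infty_{loc}$-subconvergence away from bubbles; the possible bubbles are holomorphic spheres in $W$ and holomorphic discs with boundary on $L$, each of which carries symplectic area in $\lambda \Z_{>0}$. The uniform energy bound $E(\gamma) \leq \|h-F\| \leq \|H\|_\H < \lambda$ from \eqref{energy} rules these out, while Lemmas \ref{bound} and \ref{bound F} carry over unchanged. Next I define an evaluation map at boundary points by regarding $\gamma \in \M_{[0,R]}(Z)$ as a strip $\gamma\colon \R \times [0,1] \to W$ and setting
\beq
\ev_r(\gamma) := \bigl(\gamma(r, 0), \gamma(2r, 0), \ldots, \gamma(kr, 0)\bigr) \in L^k,
\eeq
so that the natural target is $L^k$. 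Choosing Morse functions $f_1, \ldots, f_k, f_\ast$ on $L$ (and their proper quadratic extensions $\bar{f}_i$ to $M$) that represent cohomology classes $a_1, \ldots, a_k \in \H^{\geq 1}(L)$ with $a_1 \cup \cdots \cup a_k \neq 0$, the chain-homotopy of Theorem \ref{thm:main_intro} between $\theta_R$ and $\theta_0$, together with the fact that $\theta_0$ realizes the cup product on $\H^*(L)$, forces $\theta_R \neq 0$ for every $R \geq 0$. Running the subsequence argument for solutions $\gamma_n \in \M(n, x_1, \ldots, x_k, x_\ast^-, x_\ast^+)$ and shifting by $nj$ produces critical points $\widetilde y_1, \ldots, \widetilde y_{k+1}$ of $\A_H$ with actions strictly ordered and contained in $[-\|H\|_\H, \|H\|_\H]$.

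The main remaining obstacle, and the one feature that genuinely distinguishes the Floer case from the finite-dimensional situation, is that distinct critical points of $\A_H$ may project to the same Hamiltonian chord on $L$ equipped with different cappings. Two such critical points differ by a class in $\pi_2(W, L)$ whose symplectic area is a nonzero integer multiple of $\lambda$. Proceeding as in the closing argument of Theorem \ref{thm:fixed points}, any (possibly broken) Floer trajectory connecting them would have energy at least $\lambda$, but every Floer trajectory produced in our construction satisfies $E \leq \|H\|_\H < \lambda$. Hence the $\widetilde y_j$ project to $\mathrm{cuplength}(L)+1$ pairwise distinct Hamiltonian chords of $L$, proving the theorem.
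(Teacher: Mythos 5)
Your proposal is correct and follows essentially the same route as the paper: the same functional setup on the cover of $P_0(W,L)$ with $Z=L$ and $\S=\lambda$, evaluation at the $t=0$ boundary point so that the operations land in Morse cohomology of $L$, the energy bound $E\leq\|H\|_{\H}<\lambda$ to exclude sphere and disc bubbling, and the same energy-versus-action argument to rule out producing one chord twice with different cappings. The only difference is that you spell out the bubbling analysis slightly more explicitly than the paper, which simply refers back to the fixed-point case.
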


\begin{proof}
The proof follows again the proof of Theorem \ref{thm:main_intro}  with similar modifications as in the case of fixed points of Hamiltonian diffeomorphisms.

As the functional is analogous to the one for periodic orbits of $H$, we again find that $\|h-F\|$ is bounded by the Hofer norm of $H$. The evaluation map is defined by evaluating a path $x$ at time $0$ and by definition of $P_0(W,L)$, we have $x(0)\in L$. Therefore, we can choose the $f_i$ to be Morse functions on $L$ and find the cohomology operations to be the cup product in $L$ as again the evaluation maps take values in $L$. 
With these small modifications, the proof of Theorem \ref{thm:main_intro} applies also in this case and gives the desired lower bound for the number of critical points of $\A_H$ in terms of the cuplength of $L$.

We still need to show that these critical points correspond indeed to different Hamiltonian chords. Two different critical points of the action functional can give rise to the same Hamiltonian chord with a different capping, i.e., a different homotopy from the chord to a path in the Lagrangian $L$.

Similar to the case of periodic orbits, the requirement on the symplectic form and energy bounds in terms of $\|H\|$ guarantee that the broken trajectory does not have sufficient energy to connect a Hamiltonian chord to the same chord with a different capping. Namely, these two critical points of the action functional have action values that differ by at least $\lambda=\S$.
If two Hamiltonian chords are connected by a (broken) Floer trajectory $u$, the energy $E(u)$ is equal to the difference in action value and bounded above by $\|H\|<\S$. Therefore, we indeed find the desired number of different Hamiltonian chords.

\end{proof}

\subsection{Translated points}
\label{sec:translated points}

Next we apply Theorem \ref{thm:main_intro} to translated points, a notion introduced by Sandon \cite{San11}. Let $(\Sigma,\alpha)$ be a closed contact manifold. Let $\varphi:\Sigma\to\Sigma$ be a contactomorphism which is contact isotopic to the identity. We recall that a point $q\in\Sigma$ is a translated point with time-shift $\eta\in\R$
$$
\left
\{\begin{aligned}
\varphi(q)&=\theta^\eta(q)\\
\rho(q)&=1\;.
\end{aligned}
\right.
$$
We point out that the time-shift is not unique if $q$ lies on a closed Reeb orbit. The unperturbed action functional of Rabinowitz Floer homology has as critical points precisely the contractible Reeb orbits, see section \ref{sec:intro_translated}, and thus its spectral gap is 
\beq
\S=\text{minimal period of a contractible Reeb orbits}>0.
\eeq
Using the notation $\varphi^*\alpha=\rho\alpha$, where $\rho:\Sigma\to\R_{>0}$, a certain cut-off of the Hamiltonian diffeomorphisms $\phi:S\Sigma\to S\Sigma$, $\phi(q,r):=(\varphi(q),\frac{r}{\rho(q)})$ can be used to perturb the Rabinowitz action functional $F=\A$ to obtain $h=\A_\varphi$, see \cite{AFM13}, where it is proved in Lemma 3.5 that the critical points of $h$ correspond to translated points of $\varphi$. The perturbation is supported inside $\Sigma\times[e^{-\kappa(\varphi)},e^{\kappa(\varphi)}]\subset S\Sigma$
where
\beq
\kappa(\varphi):=\max_{t\in[0,1]}\left| \int_0^t\max_{x\in\Sigma}\frac{\dot{\rho}_{s}(x)}{\rho_{s}(x)^{2}}ds\right|\;.
\eeq
This implies that
\beq
\|h-F\|\leq e^{\kappa(\varphi)}\|H\|_{\H}
\eeq
where $H:S^1\times\Sigma\to\R$ is any contact Hamiltonian such that the induced contact isotopy $\psi_t$ satisfies $\varphi=\psi_1$.
\begin{Thm}\label{thm:translated_pts}
If $e^{\kappa(\varphi)}\|H\|_{\H}<\S$ then there are at least $\mathrm{cuplength}(\Sigma)+1$ many distinct translated points with time-shifts in the interval $[-e^{\kappa(\varphi)}\|H\|_{\H},e^{\kappa(\varphi)}\|H\|_{\H}]$.
\end{Thm}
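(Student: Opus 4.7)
The plan is to apply Theorem \ref{thm:main_intro} in its Floer-theoretic incarnation (as adapted in Section \ref{sec:Floer}) with $F=\A$ the unperturbed Rabinowitz action functional on $M=\Lambda S\Sigma\times\R$, $Z=\Sigma$ the Morse-Bott component of constant loops at level $r=1$, and $h=\A_\varphi$ the perturbation described in \cite{AFM13}. The Morse-Bott property of $\A$ along $\Sigma$ is \cite[Lemma 2.12]{AF10}, and the spectral gap equals the minimal period of a contractible Reeb orbit, which is strictly positive since $\Sigma$ is closed. The estimate $\|h-F\|\leq e^{\kappa(\varphi)}\|H\|_\H$ is recalled in the introduction, so the hypothesis $e^{\kappa(\varphi)}\|H\|_\H<\S$ translates directly into the hypothesis $\|h-F\|<\S$ of Theorem \ref{thm:main_intro}.

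Before invoking Theorem \ref{thm:main_intro}, the first step is to verify that Assumption \ref{Ass} holds for the relevant moduli spaces of perturbed Rabinowitz Floer trajectories associated to the interpolated functionals $G_{r,s}=\beta_r(s)\A_\varphi+(1-\beta_r(s))\A$. Here I would cite the SFT-type compactness result of \cite{AFM13}: the perturbation $\A_\varphi-\A$ is supported in the compact shell $\Sigma\times[e^{-\kappa(\varphi)},e^{\kappa(\varphi)}]$, so uniform $L^\infty$-bounds on the $r$-coordinate of a Floer trajectory follow from the usual maximum principle argument applied to the cylindrical end of the symplectization, while bubbling in $S\Sigma$ is excluded since $S\Sigma$ is exact. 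Together with the uniform energy bound provided by \eqref{energy}, this yields $C^\infty_{\mathrm{loc}}$-precompactness of sequences of trajectories (up to shift), which is exactly Assumption \ref{Ass}.

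With Assumption \ref{Ass} in hand, the proof of Theorem \ref{thm:main_intro} (as reformulated in Section \ref{sec:Ham fixed points} for Floer settings) applies essentially verbatim. One evaluates the Floer cylinders at $k=\mathrm{cuplength}(\Sigma)$ equidistributed points in the $s$-coordinate, uses that each evaluation lands in $Z=\Sigma$ after taking the adiabatic limit, and couples with Morse cochain representatives $a_1,\dots,a_k\in H^{\geq 1}(\Sigma)$ of classes with $a_1\cup\dots\cup a_k\neq 0$. Non-vanishing of the resulting cohomology operation $\theta_R$ for all $R$ produces, via the $C^\infty_{\mathrm{loc}}$-limit of $\gamma_n(\cdot+nj)$, a chain of $k+1$ non-constant negative gradient trajectories of $\A_\varphi$ connecting $k+1$ distinct critical points $\widetilde y_1,\dots,\widetilde y_{k+1}$. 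By Lemma \ref{bound F} applied in this Floer context, the critical values (i.e.\ the time-shifts) all lie in $[-\|h-F\|,\|h-F\|]\subset[-e^{\kappa(\varphi)}\|H\|_\H,e^{\kappa(\varphi)}\|H\|_\H]$. Lemma 3.5 of \cite{AFM13} then converts these critical points into translated points of $\varphi$.

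The main obstacle is the final distinctness claim: a priori, two distinct critical points of $\A_\varphi$ can represent the same geometric translated point $q\in\Sigma$ but with different time-shifts, because $q$ may lie on a closed Reeb orbit. This is the exact analogue of the capping ambiguity in Theorems \ref{thm:fixed points} and \ref{thm:chords}, and it is resolved in the same way: if two critical points of $\A_\varphi$ corresponded to the same translated point with differing time-shifts, their action difference would be an integer multiple of some Reeb period and hence at least $\S$; but any (broken) Floer trajectory between them has energy at most $\|h-F\|<\S$, a contradiction. Therefore the $\widetilde y_j$ give $\mathrm{cuplength}(\Sigma)+1$ distinct translated points with time-shifts in the required interval, completing the proof.
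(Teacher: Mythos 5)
Your overall route coincides with the paper's: reduce to the Floer-theoretic version of Theorem \ref{thm:main_intro} with $F=\A$, $Z=\Sigma$, $h=\A_\varphi$, note that $\|h-F\|\leq e^{\kappa(\varphi)}\|H\|_{\H}<\S$, and observe that the only genuinely new point is compactness of the moduli spaces. The gap lies in how you justify that compactness. You assert that uniform bounds on the $\R_{>0}$-coordinate of trajectories ``follow from the usual maximum principle argument applied to the cylindrical end of the symplectization''. The maximum principle controls escape towards the convex end $r\to\infty$, but it says nothing about the concave end $r\to 0$, and preventing trajectories from sliding down the negative end of $S\Sigma=\Sigma\times\R_{>0}$ is precisely the new difficulty the paper singles out (there is no filling here). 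In the paper this is resolved by Theorem 3.9 of \cite{AFM13}: the hypothesis $e^{\kappa(\varphi)}\|H\|_{\H}<\S$, combined with Stokes' theorem and the energy bound \eqref{energy}, yields an $\epsilon>0$ bounding the $\R_{>0}$-component of all elements of $\M$ away from zero (and also provides the uniform bound on the Rabinowitz Lagrange multiplier $\eta$, which your sketch never mentions but which is an essential part of compactness in this setting). In particular, in the paper the hypothesis does double duty---it is needed for compactness, not only for the spectral-gap condition $\|h-F\|<\S$---whereas your write-up uses it only for the latter and implicitly suggests compactness holds unconditionally, which is false. Since you do cite the SFT-type compactness of \cite{AFM13}, the repair is to say that its applicability is exactly what the hypothesis (via Stokes) guarantees, rather than invoking a maximum principle; with that, Assumption \ref{Ass} holds (bubbling is indeed excluded by exactness) and the remainder of your argument matches the paper's.

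Your closing paragraph on distinctness addresses a point the paper passes over in silence, and the idea is sound, but two details need tightening. First, the period of the closed Reeb orbit realizing the difference of two time-shifts at the same point $q$ is at least $\S$ only if that orbit is contractible; this does hold here because all loops in play lie in the component of contractible loops, so the orbit obtained by comparing the two critical points is contractible, but it should be said. Second, the relevant energy bound is not the bound $|h(\widetilde y_j)|\leq\|h-F\|$ on individual critical values (which only gives a difference $\leq 2\|h-F\|<2\S$, insufficient), but the total energy estimate \eqref{energy} applied to the limiting broken configuration: the sum of all action drops along the chain connecting the $\widetilde y_j$ is at most $\|h-F\|<\S$, so no two of them can differ in action by a contractible Reeb period. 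With these adjustments, and with the critical-value bound of Lemma \ref{bound F} interpreted as the bound on time-shifts via Lemma 3.5 of \cite{AFM13}, your proof is complete and follows essentially the paper's argument.
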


\begin{proof}
Compared to the proof given in \cite{AM10}, which was the idea behind the proof of Theorem \ref{thm:main_intro} above, the only change is that we now work in the symplectization instead of assuming that the contact manifold $(\Sigma,\alpha)$ has a exact symplectic filling.
The only new point to the proof is the compactness of the moduli spaces $\M$ since the symplectization $S\Sigma$ has a negative end.
The assumption $e^{\kappa(\varphi)}\|H\|_{\H}<\S$ together with Stokes' theorem precisely implies that Theorem 3.9 in \cite{AFM13} is applicable. We conclude that there exists $\epsilon>0$ such that for each element in $\M$ the $\R_{>0}$ component of its image in the symplectization $S\Sigma=\Sigma\times\R_{>0}$ is bounded by $\epsilon$. Thus, all elements in $\M$ stay inside a compact subset of $S\Sigma$. Now as in \cite{AM10} and the Hamiltonian Floer case above, we define the evaluation maps by evaluating the trajectories at time $0$ and apply the proof of Theorem \ref{thm:main_intro} to obtain the lower bound for the number of translated points.
\end{proof}

Translated points are a special case of the notion of leafwise intersection points introduced by Moser in \cite{Mos78}. In \cite{AM10}, a lower bound on the number of leafwise intersections in terms of a relative cuplength for Liouville fillable contact manifolds has been proved. This result can be improved as follows. If we consider a Hamiltonian diffeomorphism $\phi$ with support inside $\Sigma\times[e^{-\kappa},e^{\kappa}]\subset S\Sigma$ for some $\kappa>0$, then if $e^{\kappa(\varphi)}\|H\|_{\H}<\S$ there exist at least $\mathrm{cuplength}(\Sigma)+1$ leafwise intersections. Now $H:S^1\times S\Sigma\to \R$ is any Hamiltonian function generating $\phi$. If the contact manifold $\Sigma$ is Liouville fillable then $\phi$ extends by the identity to the filling. We point out that in this situation $\mathrm{cuplength}(\Sigma)$ is always at least as big the relative cuplength used in \cite{AM10} and thus our result here improves the bound given in \cite{AM10}.

\subsection{Solutions to perturbed Dirac-type equations}
\label{sec:hyperkahler}
In this section, we apply Theorem \ref{thm:main_intro} to the hyperk\"ahler Floer homology developed by Hohloch, Noetzel and Salamon in
\cite{HNS09} and reprove the cuplength estimate by Ginzburg and the second author in
\cite{GH12}.

Recall that we take the "time"-manifold $X$ to be either $T^3$ or $S^3$ equipped with a volume form $\mu$ and a special choice of a global frame.
If $X=T^3$, we choose the global frame $v_1, v_2, v_3$ on $X$ by $\partial_{t_i}$ on $T^3$ for angular coordinates $t_1, t_2,t_3$ and the volume form the be $\mu=dt_1\wedge dt_2\wedge dt_3$.
If $X=S^3$, we identify $S^3$ with the unit quaternions and define $v_1(x)=ix$, $v_2(x)=jx$ and $v_3(x)=kx$ and choose the volume form $\mu$ to be the (probability) Haar measure on $S^3$.

Let $Y$ be a compact, flat hyperk\"ahler manifold with almost complex structures $I, J, K$. This implies that $Y$ is some compact quotient of a hyperk\"ahler vector space, i.e., a torus or a quotient of a torus by a finite group. On $Y$, we define symplectic forms $\omega_i$ by choosing a Riemannian metric which is compatible with any linear combination of the almost complex structures $I, J$ and $K$ and setting $\omega_1(\cdot,\cdot)=\left<\cdot,I\cdot\right>$, $\omega_2(\cdot,\cdot)=\left<\cdot,J\cdot\right>$ and $\omega_3(\cdot,\cdot)=\left<\cdot,K\cdot\right>$

In this setting, we consider the manifold $M$ to be the space of smooth null-homotopic maps from $f\colon X\to Y$ and define the action functional
\bea
\A(f)=-\sum_{l=1}^3\int_{[0,1]\times X} \hat f^\ast\omega_l\wedge i_{v_l}\mu,
\eea
where $\hat f\colon [0,1]\times X\to Y$ is a homotopy from $f$ to a constant map. As the covering space of $Y$ is contractible, this functional is independent of the choice of $\hat f$ and only depends on the map $f$.
This action functional is the one defined in \cite{GH12} and agrees with the formulas given in \cite{HNS09}.
The differential of the action functional $\A$ at $f$ is
\bea
(d\A)_f(w)=\sum_{l=1}^3\int_M\omega_l(L_{v_l}f,w)\mu
\eea
and the $L^2$-gradient of $\A$ at $f$ is given by $\partialslash f:=IL_{v_1}f+JL_{v_2}f+KL_{v_3}f$.
 Critical points of $\A$ are solutions to the Dirac-type equation 
 \bea
 \partialslash f=0.
\eea
The functional $\A$ takes the role of the function $F$ from the Morse case.
All critical points of $F$ are constant, i.e., we again identify the critical submanifold $Z$ with the manifold $Y$ and find that the spectral gap is $\S=\infty$. The functional $\A$ is Morse-Bott along $Z$, see \cite[Lemmas 2.5 \& 3.7]{HNS09}.

Just as in the case of classical Hamiltonian dynamics described above, we consider a Hamiltonian perturbation for a Hamiltonian $H\colon X\times Y\to \R$. The function $h$ is then given by the functional
\bea
h(f):=\A_H(f)=-\sum_{l=1}^3\int_{[0,1]\times X} F^\ast\omega_l\wedge i_{v_l}\mu-\int_XH(f)\mu.
\eea
For this functional, critical points are solutions to the equation
\bea
\partialslash f=\nabla H(f).
\eea
As in the case of classical Hamiltonian dynamics discussed above, Theorem \ref{thm:main_intro} implies the following
\begin{Thm}
\label{thm:cuplength-Dirac}
For all Hamiltonians $H$, the number of critical points of $\A_H$ is bounded below by $\mathrm{cuplength}(Y)+1$.
\end{Thm}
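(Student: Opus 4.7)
The plan is to verify that the hyperkähler setup satisfies all the hypotheses of Theorem \ref{thm:main_intro} and then invoke it directly. Concretely, with $M$ the space of null-homotopic maps $f\colon X\to Y$, the functional $F=\A$, and $Z=Y$ identified with the constant maps, the Morse-Bott property along $Z$ is exactly the content of \cite[Lemmas 2.5 \& 3.7]{HNS09}: the Hessian of $\A$ at a constant map is the Dirac operator $\partialslash$ acting on sections, whose kernel consists precisely of the parallel sections, i.e., the tangent directions to $Z$. Since all critical points of $\A$ are constant, the spectral gap is $\S=\infty$, so the condition $\|h-F\|<\S$ is automatic regardless of $H$, and there is no quantitative constraint to worry about.

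The main technical point is then to verify Assumption \ref{Ass} for the moduli space $\M$ in this setting. The gradient equation for the interpolating functional $G_{r,s}=\beta_r(s)\A_H+(1-\beta_r(s))\A$ becomes a perturbed Dirac-type PDE on a map $u\colon\R\times X\to Y$, namely
\beq
\p_s u + \partialslash u + \beta_r(s)\nabla H(x,u) = 0\;.
\eeq
Because $Y$ is compact, all solutions have a uniform $C^0$-bound for free. Given a sequence $(r_n,\gamma_n)\in\M$ and shift parameters $s_n$, the reparametrized maps $u_n(s,x):=\gamma_n(s-s_n)(x)$ are uniformly bounded in $C^0$, and the elliptic regularity theory for the Dirac-type operator $\p_s+\partialslash$ together with the uniform energy bound coming from Lemma \ref{bound} (equivalently \eqref{energy}) yields uniform $W^{1,p}_{loc}$ bounds and, by bootstrapping, uniform $C^k_{loc}$ bounds for every $k$. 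Passing to a subsequence via Arzelà–Ascoli produces the $C^\infty_{loc}$-limit required by Assumption \ref{Ass}.

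The step I expect to require the most care is excluding bubbling during this bootstrap, since in Floer-theoretic settings compactness usually fails exactly there. Here, however, the target $Y$ is a flat hyperkähler manifold, hence a torus or a finite quotient thereof, so $\pi_2(Y)=0$ and no non-constant holomorphic spheres exist for any of the complex structures $I,J,K$. Consequently, the standard rescaling argument that would produce a bubble terminates trivially, and the uniform local gradient bounds needed for elliptic regularity go through. This closes the compactness argument and establishes Assumption \ref{Ass}.

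With these hypotheses in hand, Theorem \ref{thm:main_intro} applies verbatim with $F=\A$, $h=\A_H$, $Z=Y$, and yields at least $\mathrm{cuplength}(Y)+1$ critical points of $\A_H$, which by construction are precisely the solutions of the perturbed Dirac-type equation $\partialslash f=\nabla H(f)$. The only modification compared with the Morse case, as in Section \ref{sec:Ham fixed points}, is that the evaluation maps used to realize the cup product must be taken as evaluation of $\gamma$ at time $s=jr$ together with evaluation at a fixed basepoint $x_0\in X$, so that they land in $Y^k$ and detect the cup product in $\H^*(Y)$; this is possible because every trajectory converges in $C^\infty$ to constant maps at $s\to\pm\infty$.
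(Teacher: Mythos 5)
Your proposal is correct and follows essentially the same route as the paper: verify the hypotheses of Theorem \ref{thm:main_intro} in this setting (Morse--Bott property of $\A$ along $Z=Y$ via \cite{HNS09}, spectral gap $\S=\infty$, Assumption \ref{Ass}) and then apply it with the evaluation maps taken at a fixed basepoint of $X$ so that they land in $Y^k$ and realize the cup product on $\H^*(Y)$. The only difference is that the paper simply cites \cite{HNS09} for Assumption \ref{Ass}, whereas you sketch the compactness argument yourself; be aware that the actual mechanism there is the flatness of $Y$ (vanishing curvature terms give a priori derivative bounds from the energy bound), for which your $\pi_2(Y)=0$ ``no holomorphic spheres'' heuristic is only a rough proxy.
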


\begin{proof}
In this setting and with these choices of global frames, Assumption \ref{Ass} is established in \cite{HNS09}, where also the hyperk\"ahler Floer homology is defined. 
To define the evaluation maps, we evaluate the trajectories at $x=(0,0,0)$ if $X=T^3$ or, if $X=S^3$ is identified with the unit quaternions, at $x=1$. Then the evaluation maps take values in $Y$ and we can choose the Morse functions $f_i$ to be functions on $Y$. As in the case of Hamiltonian Floer homology discussed above, the proof of Theorem \ref{thm:main_intro} gives the lower bound for the number of critical points of $\A_H$ to be $\mathrm{cuplength}(Y)+1$ for all Hamiltonians $H$.
\end{proof}

%

\end{document}